\crefname{equation}{}{}
\crefname{section}{section}{sections}
\crefname{figure}{figure}{figures}
\crefname{table}{table}{tables}
\crefname{example}{example}{examples}
\Crefname{section}{Section}{Sections}
\Crefname{figure}{Figure}{Figures}
\Crefname{table}{Table}{Tables}
\Crefname{definition}{Definition}{Definitions}
\Crefname{theorem}{Theorem}{Theorems}
\Crefname{remark}{Remark}{Remarks}
\Crefname{example}{Example}{Examples}
\numberwithin{equation}{section}
\theoremstyle{definition}
\newtheorem{Thm}{Theorem}[section]
\newtheorem{Prop}[Thm]{Proposition}
\newtheorem{Lmm}[Thm]{Lemma}
\newtheorem{Cor}[Thm]{Corollary}
\newtheorem{Rmk}[Thm]{Remark}
\theoremstyle{plain} 
\newcommand{\thisTheoremname}{}
\newtheorem{genericthm}[Thm]{\thisTheoremname}
\newcommand{\ZT}{\mathcal{Z}}
\newcommand{\HM}{\hat{\mathcal{H}}^{\partial M}}
\newcommand{\IV}{\mathcal{I}_{\ZT}}
\newcommand{\EV}{\mathcal{E}_{\ZT}}
\newcommand{\Iz}{\mathcal{I}_{\ZT=0}}
\newcommand{\Ez}{\mathcal{E}_{\ZT=0}}
\newcommand{\RRR}{\mathbb{R}^3}
\newcommand{\R}{\mathbb{R}}
\newcommand{\Hone}{\mathcal{H}^1(\mathbb{R}^3)}
\newcommand{\Honehat}{\hat{\mathcal{H}}^1(\mathbb{R}^3)}
\newcommand{\Lone}{\mathcal{L}^1(\mathbb{R}^3)}
\newcommand{\Ltwo}{\mathcal{L}^2(\mathbb{R}^3)}
\newcommand{\Linfty}{\mathcal{L}^\infty(\mathbb{R}^3)}
\newcommand{\N}{\mathbb{N}}
\newcommand{\supp}{\text{supp}\,{}}
 \renewcommand{\d}{\,\text{d}}
\newcommand{\ntends}{\xrightarrow[n \rightarrow \infty]{}}
\title{An Ohta-Kawasaki Model set on the space}
\author{Lorena Aguirre Salazar$^1$, Xin Yang Lu$^2$, Jun-cheng Wei$^3$}
\date{\small $^1$Department of Mathematical Sciences, Lakehead University, One Georgian Dr., Barrie, ON, L4M 3X9, Canada. Email: \texttt{lorena.aguirresalazar@lakeheadu.ca}\\
$^2$Department of Mathematical Sciences, Lakehead University,
955 Oliver Rd., Thunder Bay, ON, P7B 5E1, Canada. Email: \texttt{xlu8@lakeheadu.ca}\\
$^3$Department of Mathematics, University of British Columbia, Vancouver, BC, V6T 1Z2, Canada. Email: \texttt{jcwei@math.ubc.ca}
}
\begin{document}

\maketitle

\begin{abstract}
We examine a non-local diffuse interface energy with Coulomb repulsion in three dimensions inspired by the Thomas-Fermi-Dirac-von Weizs\"{a}cker, and the Ohta-Kawasaki models. We consider the corresponding mass-constrained variational problem and show the existence of minimizers for small masses, and the absence of minimizers for large masses.
\end{abstract}

\section{Introduction}

We frequently encounter variational problems featuring competing terms in studies related to energy-driven pattern formation. The Thomas-Fermi-Dirac-von Weizs\"{a}cker (henceforth TFDW) and the Ohta-Kawasaki models stand as representative functionals that have received increasing attention~ \cite{FrankNamVanDenBosch,aguirre2020mass,aguirre2021convergence,choksi2001scaling,choksi2011small,choksi2009phase,goldman2013gamma,goldman2014gamma,lu2014nonexistence,muratov2010droplet,spadaro2009uniform}.\newline 

The TFDW theory~\cite{le2005atoms,Lieb1981} is a density functional theory that is used to approximate the many-body Schr\"{o}dinger theory. Mathematically, the TFDW theory is defined (up to rescaling) by the energy functional
\begin{align}
\int_{\RRR}\left(\vert \nabla u\vert ^2+c_1|u\vert^{\frac{10}{3}}-c_2|u\vert^{\frac{8}{3}}-\ZT \frac{u^2}{\vert x\vert}\right)\d x+\frac{1}{2}\int_{\RRR}\int_{\RRR}\frac{ u^2({x}) u^2({y}) }{\vert {x}-{y}\vert }\d x\d {y},
\end{align}
with $c_1,c_2>0$, and $\ZT\geq0$, on the class of functions $u\in H^1(\RRR)$ with a prescribed $\mathcal{L}^2$-norm. Each such $u$ represents an electron density function with mass given by its $\mathcal{L}^2$-norm. The energy is to be thought of as the energy of a system of a fixed number of electrons interacting with a nucleus of charge $\ZT$ fixed at the origin. Finding the infimum of the energy makes sense because Chemical and Physical systems are usually found in their most stable state, and that corresponds to the lowest energy possible. The infimum corresponds to the ground state energy, an optimal $u$ corresponds to a state or electronic configuration of optimal energy, and such $u$ sheds light on properties of an atom.\newline

The first density functional theory was the Thomas-Fermi (henceforth TF) theory~\cite{fermi1927metodo,thomas1927calculation}, a theory that captures the leading-order behavior of the ground state energy of atoms in the large $\ZT$ limit. But negative ions were absent in this theory~\cite{LiebSimonsTF}. Then, a leading-order correction was incorporated by adding the von Weizs\"{a}cker gradient term~\cite{weizsacker1935theorie} to the energy functional. In the Thomas–Fermi–von Weizsäcker (henceforth TFW) theory, negative ions do exist while arbitrarily negative ions do not~\cite{BenguriaBrezisLieb}. Finally, a second-order correction to the TF theory was obtained by adding Dirac's term~\cite{dirac1930note} to the energy functional, the term with power $8/3$.\newline 

Regarding existence of minimizers of the TFDW model, there exist $\epsilon_1,\epsilon_2>0$ such that there exists a minimizer for masses less than $\ZT+\epsilon_1$~\cite{lions1987solutions,le1993thomas}, and there are no minimizers for masses larger than $\ZT+\epsilon_2$.\newline

On the other hand, the Ohta-Kawasaki model was originally introduced in ~\cite{OhtaKawasaki} in the context of microphase separation in diblock copolymer melts. A diblock copolymer molecule is a linear chain consisting of two subchains made of two different monomers joined covalently to each other. Microphase separation occurs as monomers of the same type attract while monomers of opposite type repel.\newline

The Ohta-Kawasaki model is defined (up to rescaling) by the energy functional
\begin{equation}\int_\Omega\left[\frac{\epsilon}{2}\vert \nabla u \vert^2+\frac{1}{4\epsilon}(1-u^2)^2\right]\d x+\frac12\int_\Omega\int_\Omega G(x,y)[u(x)-m][u(y)-m]\d x\d y,\end{equation}
where $\Omega\subset\mathbb{R}^3$ is a fixed open set, the domain occupied by the material, $\epsilon>0$ is a parameter that is proportional to the thickness of the transition regions between the two monomers, $G$ is the Neumann Green's function of the Laplacian, $u\in H^1(\Omega)$ is the scalar order parameter, and $m:=\fint_\Omega u\d x\in(-1,1)$, the background charge density, is prescribed. The function $u$ is the difference between the averaged densities of the two monomers, so $u$ takes values between $-1$ and $1$ and $u=\pm1$ when there is a concentration of a single monomer.\newline

It is worth noting that the Ohta-Kawasaki model extends its relevance to a broad spectrum of other physical systems ~\cite{chen1993dynamics,de1979effect,nagaev1995phase,nyrkova1994microdomain,glotzer1995reaction,lundqvist1983density,emery1993frustrated,lattimer1985physical,maruyama2005nuclear,muratov2002theory}; it corresponds to a diffuse interface version of the Liquid Drop model~\cite{ChoksiMuratovTopaloglu} in the sense of $\Gamma$-convergence, and to a Cahn-Hilliard model~\cite{Sternberg} with a non-local term. \newline

Regarding existence of minimizers, it is possible to use the direct method of the Calculus of Variations to show that a minimizer will always exist for all choices of the parameters, and $\Omega$ smooth and bounded.\newline

In this work, we study the existence of minimizers of a non-local diffuse interface energy posed on the space, inspired by the TFDW and the Ohta-Kawasaki models. More precisely, we consider the problem
\begin{align}\IV(M):=\inf\left\{\EV(u);u\in\HM\right\},\end{align}
where the energy functional $\EV$ is defined as
\begin{align}\EV(u)&:=\int_{\RRR}\left[\frac{\vert\nabla u\vert^2}{2}+ \frac{1}{2}u^2(1-u)^2-Vu\right]\d x+\frac{1}{2}\int_{\RRR}\int_{\RRR}\frac{u({x})u({y})}{\vert x-y\vert}\d x\d y.\end{align}
with $V:\RRR\to\R^+$ given by
\begin{align}\label{VTFDW}V( x ):=\frac{\ZT}{\vert x\vert},\end{align}
and
\begin{align}\HM:=\left\{u\in\Honehat:u\geq0\text{ a.e. in }\RRR\text{ with }\vert\vert u\vert\vert_{\Lone}=M\right\},\end{align}
where $\Honehat=\overline{\mathcal{C}_0^{\infty}(\RRR)}$ with respect to the norm $\vert\vert \cdot\vert\vert_{\Lone}+\vert\vert \nabla\cdot\vert\vert_{\mathcal{L}^2(\RRR)}$.


Our main result is the following one:

\begin{Thm}\label{Th1} There exist constants $0<\ZT\leq M_1\leq M_2<\infty$ such that:
\begin{enumerate}[(i)]
\item If $M\leq M_1$, then there is a minimizer.
\item If $M\geq M_2$, then there are no minimizers.
\end{enumerate}
\end{Thm}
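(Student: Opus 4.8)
The plan is to handle the two parts by complementary methods: part~(i) by the concentration--compactness principle, and part~(ii) by an excess--charge comparison argument.

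For part~(i) I would first check that $\EV$ is bounded below on $\HM$ and that any minimizing sequence is bounded in $\Honehat$. The only term that can be negative is $-\int_{\RRR}Vu\d x=-\ZT\int_{\RRR}u/|x|\d x$; splitting this over $B_1$ and its complement, Hölder together with the Sobolev embedding $\Honehat\hookrightarrow\mathcal{L}^6(\RRR)$ controls the near-singularity part by $\ZT C\|\nabla u\|_{\mathcal{L}^2(\RRR)}$, while the far part is at most $\ZT M$. Since the Coulomb and double-well terms are nonnegative, Young's inequality absorbs the gradient bound and yields $\EV(u)\ge -c(\ZT,M)>-\infty$ along with a uniform bound on $\|\nabla u_n\|_{\mathcal{L}^2(\RRR)}$; combined with $\|u_n\|_{\Lone}=M$ this bounds $\{u_n\}$ in $\Honehat$.

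I would then apply concentration--compactness to the densities $u_n\ge0$, which carry fixed mass $M$. Writing $\mathcal{I}^{\infty}(m)$ for the analogous infimum with the potential term deleted (the problem ``at infinity''), one checks by spreading a bump of mass $m$ over a ball of radius $R\to\infty$ that the gradient, double-well and Coulomb terms all tend to $0$, so $\mathcal{I}^{\infty}(m)=0$ for every $m\ge0$. Vanishing is excluded by showing $\IV(M)<0$ in the relevant range: a uniform trial density of mass $M$ on a ball of radius $R$ centred at the origin gives attraction $-\tfrac{3\ZT M}{2R}$ and Coulomb self-energy $\tfrac{3M^2}{5R}$, whose sum $\tfrac{3M}{5R}\big(M-\tfrac{5}{2}\ZT\big)$ is negative for $M<\tfrac{5}{2}\ZT$ while the smoothed gradient and double-well terms are $o(1/R)$; hence $\IV(M)<0$ for $M\le M_1$ with some $M_1\ge\ZT$. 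Under vanishing, standard estimates using $V\in\mathcal{L}^{q}_{\mathrm{loc}}(\RRR)$ for $q<3$ and $V(x)\to0$ as $|x|\to\infty$ give $\int_{\RRR}Vu_n\d x\to0$, so the nonnegative remaining terms force $\liminf\EV(u_n)\ge0>\IV(M)$, a contradiction. Dichotomy is excluded by the strict binding inequality $\IV(M)<\IV(m)+\mathcal{I}^{\infty}(M-m)=\IV(m)$ for $0<m<M\le M_1$, i.e.\ strict monotonicity of $\IV$, which I would obtain by taking a near-optimal $u_m$ of mass $m$ and adding the extra mass $M-m$ near the origin, where the gain $-\ZT(M-m)/r$ from the attraction beats the $O((M-m)^2)$ Coulomb and local costs while the net charge remains attractive. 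With vanishing and dichotomy excluded the minimizing sequence is compact, and weak lower semicontinuity of the gradient and double-well terms together with strong local $\mathcal{L}^{p}$ convergence (which passes to the Coulomb term) produces a minimizer.

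For part~(ii) I would argue by contradiction: suppose $u$ minimizes $\EV$ at mass $M$. For large $M$ the self-energy $\tfrac12\iint uu/|x-y|$ grows like $M^2$ unless $u$ spreads over a scale increasing with $M$, so a minimizer must possess a genuine outer tail and one can select a radius $R$, growing with $M$, carrying a removable shell of mass $\delta$. Removing this shell and re-emitting its mass as a far spread-out bump changes the energy by $\Delta\EV\approx(\ZT-M)\,\delta/R+(\text{lower-order local and cutoff terms})$, the leading contributions being the lost attraction $+\ZT\delta/R$ and the lost repulsion $-M\delta/R$ of the shell against the inner charge (Newton's theorem); for $M$ large this is strictly negative, contradicting minimality. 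To make this rigorous I would pass through the Euler--Lagrange equation $-\Delta u+u(1-u)(1-2u)-V+\Phi_u=\lambda$, with $\Phi_u=u*|\cdot|^{-1}$ and multiplier $\lambda$, and run a Lieb-type test-function argument: since $\Phi_u(x)-V(x)\to(M-\ZT)/|x|$ at infinity, the net potential is repulsive in the tail once $M>\ZT$, so integrating the equation against a suitable weight bounds the mass that can be bound and forces $M\le M_2$, i.e.\ no minimizer exists for $M\ge M_2$.

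The main obstacle is the strict binding inequality in part~(i) and its controlled breakdown in part~(ii). Handling the nonlocal Coulomb term under concentration--compactness --- showing that no mass, hence no Coulomb energy, leaks to infinity in the binding regime, and conversely that excess mass must leak for large $M$ --- is the delicate point; the local gradient and double-well energies are routine, but it is the competition between the long-range attraction $-\ZT/|x|$ and the long-range repulsion $\Phi_u$ that pins down $M_1$ and $M_2$ and requires the careful decay and tail estimates on any putative minimizer described above.
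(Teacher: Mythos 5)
Your overall architecture (concentration--compactness for existence, excess-charge comparison for nonexistence) is reasonable, but both halves contain genuine gaps, and in both places the paper takes a different and more robust route. For part (i), everything up to and including the exclusion of vanishing matches what the paper actually proves (coercivity as in Lemma \ref{CrcvWkLscOK}, $\Iz\equiv 0$ as in Lemma \ref{InftyFunctionalIsZero}, negativity of $\IV(M)$ as in Lemma \ref{PrprtsIOK}). The gap is the strict binding inequality $\IV(M)<\IV(m)$ for $0<m<M$, which your route requires to rule out dichotomy. Your sketch (``the gain $-\ZT(M-m)/r$ beats the $O((M-m)^2)$ Coulomb and local costs'') miscounts the costs: the added bump's Coulomb repulsion against $u_m$ is \emph{first} order in $\delta=M-m$ (roughly $\delta\int u_m(y)|y|^{-1}\d y$), not $O(\delta^2)$, and a bump of mass $\delta$ concentrated at scale $r$ near the origin costs of order $\delta^2r^{-5}$ in gradient energy and $\delta^4r^{-9}$ in the quartic part of the double well, so the balance against the $\ZT\delta/r$ gain must be carried out explicitly; moreover, for $\ZT<M\le \tfrac52\ZT$, which your proposed value of $M_1$ would include, the far field of a putative minimizer is net repulsive and strict binding is exactly what one expects to fail. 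The paper avoids strict binding entirely: Lemma \ref{PrprtsIOK} uses only \emph{non-strict} monotonicity of $\IV$ to show that the weak limit $u_m$ of a minimizing sequence attains $\IV(m)=\IV(M)$ for some $0<m\le M$; if $m<M$, then $\IV$ is constant on $[m,M]$, so $u_m$ satisfies the variational inequality \eqref{closeToELZTOK} with zero multiplier, and Proposition \ref{NoSlnsZrEig} (test functions supported on the annuli $B_{2n}\setminus B_n$) forces $m\ge\ZT$, contradicting $m<M\le\ZT$. You should either adopt this zero-multiplier argument or supply a complete proof of strict binding.

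For part (ii), the heuristic $(\ZT-M)\delta/R$ is the right physics, but the proposed rigorous implementation --- a Lieb-type argument integrating the Euler--Lagrange equation against a weight such as $|x|u$ --- does not transfer to this functional. The derivative of the double-well term is $u(1-u)(1-2u)$, which is sign-indefinite on $(1/2,1)$, so the weighted integral of that term cannot be discarded; and the constraint here is the $\mathcal{L}^1$ norm rather than the $\mathcal{L}^2$ norm, so the bookkeeping of Lieb's argument (which bounds $\int u^2$) does not directly bound $M$. You would also need to justify the Euler--Lagrange \emph{equation} itself in the presence of the obstacle $u\ge0$, which Lemma \ref{OnObstacleForSmallMass} only removes for $M\le\ZT$ --- precisely the regime you are not in. The paper instead runs the Frank--Nam--Van Den Bosch localization (Lemmas \ref{analogue of Lemma 3.1 of Frank-Nam-Van Den Bosch} and \ref{analogue of Lemma 3.2 of Frank-Nam-Van Den Bosch}) directly on the energy, with no Euler--Lagrange equation, and closes the argument with the a priori bounds on $\|u\|_{\Ltwo}^2$ and $\mathcal{D}(u)$ from Lemmas \ref{relating L2 and L1 norms - lemma} and \ref{relating L2 and L1 norms - improved lemma}, arriving at $\tfrac18M^2\le O(M)+C(\ZT)$, which fails for large $M$. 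Some version of these $\mathcal{L}^2$-versus-$\mathcal{L}^1$ bounds is indispensable in your approach as well and is absent from your proposal.
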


\begin{Rmk}While we expect $M_1=M_2=\ZT$, it remains open to prove or disprove this.\end{Rmk}

The paper is organized as follows. In Section 2 we describe some basic properties of the energy functional and its minimizers. In Section 3 we prove part (a) of Theorem \ref{Th1}. Finally, in Section 4 we prove part (b) of Theorem \ref{Th1}. Our overall strategy is to adapt some approaches and techniques developed in~\cite{BenguriaBrezisLieb} for establishing the existence of minimizers of the TFW energy for sufficiently small masses, and in~\cite{FrankNamVanDenBosch} for the nonexistence of minimizers of the TFDW energy for large masses. There are important changes we outline as proofs unfold.\newline

In what follows, we use the notation
$$\mathcal{D}(f,g):=\frac{1}{2}\int_{\RRR}\int_{\RRR}\frac{f({x})g({y})}{\vert x-y\vert}\d x\d y.$$
We will refer to $\mathcal{D}(u,u)=:\mathcal{D}(u)$ as the {\em Coulomb repulsion term}, $\frac12\int_{\mathbb{R}^3} u^2(1-u)^2 \d x$ as the {\em double well term}, and
$-\int_{\mathbb{R}^3}Vu \d x$
as the {\em attraction term}. Denote by $B_R$ the ball centered around the origin with radius $R$. Finally, $C$ will denote some (positive) universal constant that might change from line to line.

\section{General Estimates}

The goal of this section is to establish some properties of the energy functional and its minimizers.\newline

Our first result tells us that the condition $u\geq0$ plays no role as long as $\vert\vert u\vert\vert_{\Lone}$ is not too large.

\begin{Lmm}\label{OnObstacleForSmallMass}Let $u\in\hat{\mathcal{H}}^1(\RRR)$. If $\vert\vert u\vert\vert_{\Lone}\leq\ZT$, then $\EV(\vert u\vert)\leq \EV(u)$.\end{Lmm}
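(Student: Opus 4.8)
The plan is to split $u$ into its positive and negative parts, $u_+:=\max\{u,0\}$ and $u_-:=\max\{-u,0\}$, so that $u=u_+-u_-$ and $|u|=u_++u_-$, and then to compare $\EV(|u|)$ with $\EV(u)$ one term at a time. The gradient term is unaffected, since $|\nabla|u||=|\nabla u|$ almost everywhere. For the attraction term, the facts $V\ge0$ and $|u|-u=2u_-\ge0$ give
\[
-\int_{\RRR}V|u|\,\d x=-\int_{\RRR}Vu\,\d x-2\int_{\RRR}Vu_-\,\d x\le-\int_{\RRR}Vu\,\d x,
\]
so this term can only decrease.

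For the double-well term nothing changes on $\{u\ge0\}$, while on $\{u<0\}$ a direct computation gives $\tfrac12|u|^2(1-|u|)^2-\tfrac12u^2(1-u)^2=2u^3=-2u_-^3$; hence the double-well term drops by exactly $2\int_{\RRR}u_-^3\,\d x$. The only growing contribution is the Coulomb repulsion: by bilinearity and symmetry of $\mathcal{D}$,
\[
\mathcal{D}(|u|)-\mathcal{D}(u)=\mathcal{D}(u_++u_-)-\mathcal{D}(u_+-u_-)=4\,\mathcal{D}(u_+,u_-)\ge0.
\]
Collecting the four contributions, proving the lemma is equivalent to establishing the single estimate
\[
4\,\mathcal{D}(u_+,u_-)\le 2\int_{\RRR}Vu_-\,\d x+2\int_{\RRR}u_-^3\,\d x,
\]
that is, the repulsion between the positive and negative parts must be dominated by the nuclear attraction felt by $u_-$ together with the double-well savings.

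This reduced estimate is the main obstacle, and it is exactly here that the hypothesis $\|u\|_{\Lone}\le\ZT$ enters, through $\|u_+\|_{\Lone}\le\|u\|_{\Lone}\le\ZT$. Writing $\phi_+(y):=\int_{\RRR}\frac{u_+(x)}{|x-y|}\,\d x$ for the Newtonian potential of $u_+$, the inequality becomes $\int_{\RRR}u_-\,(\phi_+-V)\,\d x\le\int_{\RRR}u_-^3\,\d x$. The tool I would reach for is Newton's theorem: a nonnegative charge of total mass at most $\ZT$ produces, in the radially symmetric case, a potential bounded by $\ZT/|y|=V(y)$, so that $\phi_+-V\le0$ and the left-hand side is already nonpositive. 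I expect the genuinely hard part to be the region where $\phi_+>V$, near concentrations of $u_+$, where this screening bound is not available pointwise once radial symmetry is dropped; there the short-range repulsion would have to be absorbed into the cubic term $\int u_-^3$, exploiting that $u_+$ and $u_-$ have disjoint supports. Making this near-field estimate rigorous, with constants that keep the attraction and double-well dominant, is the crux; should it resist a fully general argument, a reduction to radially symmetric competitors via rearrangement would be the natural fallback.
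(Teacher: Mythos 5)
Your term-by-term bookkeeping is correct and matches the structure of the paper's argument: the gradient term is unchanged, the attraction and double-well terms can only decrease, the Coulomb term grows by exactly $4\mathcal{D}(u_+,u_-)$, and the lemma reduces to showing $4\mathcal{D}(u_+,u_-)\le 2\int_{\RRR}Vu_-\d x + 2\int_{\RRR}u_-^3\d x$ (the paper in fact proves the stronger bound without the cubic savings). But that single inequality \emph{is} the content of the lemma, and you do not prove it. You correctly identify Newton's theorem as the relevant tool and correctly observe that the pointwise screening bound $\int_{\RRR}\frac{u_+(x)}{|x-y|}\d x\le \ZT/|y|$ is only available when $u_+$ is radial --- indeed it genuinely fails pointwise otherwise (take $u_+$ concentrated near some $x_0\ne 0$: its potential blows up near $x_0$ while $\ZT/|x_0|$ stays finite) --- and you then leave the general case as an acknowledged open "crux," with two speculative remedies. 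Neither is carried out, and the rearrangement fallback points in the wrong direction: Riesz's rearrangement inequality \emph{increases} $\mathcal{D}(u_+,u_-)$, which is precisely the quantity you need to bound from above.

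The paper's way around this is not a pointwise bound but spherical averaging of \emph{both} factors. Writing $\overline{f}$ for the spherical average of $f$ about the origin, it estimates $2\mathcal{D}(u_-,u_+)-\ZT\int_{\RRR}\frac{u_-(x)}{|x|}\d x$ by $2\mathcal{D}(\overline{u_-},\overline{u_+})-\ZT\int_{\RRR}\frac{\overline{u_-}(x)}{|x|}\d x=\int_{\RRR}\bigl(\overline{u_+}\ast|\cdot|^{-1}-\ZT/|x|\bigr)\overline{u_-}\d x$; the attraction integral is unchanged under averaging of $u_-$ because $V$ is radial, and Newton's theorem (equation (35) of Lieb--Simon) now applies to the radial function $\overline{u_+}$, giving $\overline{u_+}\ast|\cdot|^{-1}(x)\le \|u_+\|_{\Lone}/|x|\le \ZT/|x|$ and hence nonpositivity of the whole expression. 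So the missing idea is exactly the averaging step that makes Newton's theorem applicable (together with the assertion that the cross repulsion does not decrease under it); without it, your argument establishes everything except the statement to be proved.
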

\begin{proof}We have
\begin{align}2\mathcal{D}(u_-,u_+)-\int_{\RRR}Vu_-\d x&=2\mathcal{D}(u_-,u_+)-\ZT\int_{\RRR}\frac{u_-(x)}{\vert x\vert}\d x\\
 &\leq2\mathcal{D}(\overline{u_-},\overline{u_+})-\ZT\int_{\RRR}\frac{\overline{u_-}({x})}{\vert x\vert}\d x\\
 &=\int_{\RRR}\left(\overline{u^+}\ast\vert\cdot\vert^{-1}-\frac{\ZT}{\vert x\vert}\right)\overline{u^-}({x})\d x,\end{align}
 where the line over functions corresponds to their spherical average. Moreover, by equation (35) in~\cite{LiebSimonsTF}, we have
 \begin{equation}\overline{u^+}\ast\vert\cdot\vert^{-1}(x)\leq\frac{\vert\vert \overline{u^+} \vert\vert_{\Lone}}{\vert x\vert}, \quad \vert x\vert>0.\end{equation}
 Consequently,
 \begin{align}2\mathcal{D}(u_-,u_+)-\int_{\RRR}Vu_-\d x
 &\leq\int_{\RRR}\left(\vert\vert \overline{u^+}\vert\vert_{\Lone}-\ZT\right)\frac{\overline{u^-}({x})}{\vert {x}\vert}\d x\leq0.\end{align}

As a result,
 \begin{align}\mathcal{D}(u)-\int_{\RRR}Vu&=\mathcal{D}(u_+,u_+)-2\mathcal{D}(u_-,u_+)+\mathcal{D}(u_-,u_-)-\int_{\RRR}Vu_+\d x+\int_{\RRR}Vu_-\d x\\
 &\geq \mathcal{D}(u_+,u_+)+2\mathcal{D}(u_-,u_+)+\mathcal{D}(u_-,u_-)-\int_{\RRR}Vu_+\d x-\int_{\RRR}Vu_-\d x\\
 &=\mathcal{D}(\vert u\vert,\vert u\vert)-\int_{\RRR}V\vert u\vert \d x.\end{align}
On the other hand, all other terms in $\EV(u)$ do not increase if we replace $u$ by $\vert u\vert$. Consequently, the result follows.

\end{proof}

\begin{Cor} If $M\leq \ZT$, then
\begin{align}\label{OKmodelNoAbsVlNnlcNoObstcl}\IV(M)=\inf\left\{\EV(u);u\in\hat{\mathcal{H}}^1(\RRR),\vert\vert u\vert\vert_{\Lone}=M\right\}.\end{align}
\end{Cor}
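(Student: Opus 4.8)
The plan is to derive this directly from \Cref{OnObstacleForSmallMass} by a two-sided inequality argument, treating the statement as a comparison of two infima taken over nested feasible sets. Write $B := \{u \in \Honehat : \|u\|_{\Lone} = M\}$ for the feasible set on the right-hand side, and recall that $\IV(M)$ is the infimum of $\EV$ over $\HM \subseteq B$, the subset of nonnegative competitors. Since the infimum over a larger set cannot exceed the infimum over a smaller one, the inequality $\inf_{u \in B} \EV(u) \leq \IV(M)$ is immediate from the inclusion $\HM \subseteq B$, and requires no work.

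For the reverse inequality I would fix an arbitrary $u \in B$ and pass to $|u|$. The key observation is that the hypothesis of \Cref{OnObstacleForSmallMass} is met: by assumption $\|u\|_{\Lone} = M \leq \ZT$, so the lemma yields $\EV(|u|) \leq \EV(u)$. It remains to check that $|u|$ is an admissible nonnegative competitor, i.e.\ that $|u| \in \HM$. This amounts to the standard closure property of $\Honehat$ under absolute values: if $u \in \Honehat$ then $|u| \in \Honehat$ with $\nabla |u| = (\operatorname{sgn} u)\,\nabla u$ almost everywhere, whence $\|\nabla |u|\|_{\mathcal{L}^2(\RRR)} = \|\nabla u\|_{\mathcal{L}^2(\RRR)}$ and $\||u|\|_{\Lone} = \|u\|_{\Lone} = M$; in particular $|u| \geq 0$ a.e.\ with the correct mass, so $|u| \in \HM$. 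Consequently $\IV(M) \leq \EV(|u|) \leq \EV(u)$, and taking the infimum over all $u \in B$ gives $\IV(M) \leq \inf_{u \in B} \EV(u)$.

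Combining the two inequalities yields the claimed identity \eqref{OKmodelNoAbsVlNnlcNoObstcl}. The argument is essentially a one-line consequence of the lemma, so there is no substantial obstacle; the only point deserving care is the verification that $u \mapsto |u|$ maps $\Honehat$ into itself while preserving both the $\mathcal{L}^1$-mass constraint and membership in the completion defining $\Honehat$, which is where the pointwise gradient identity for $|u|$ is invoked.
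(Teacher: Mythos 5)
Your argument is correct and is precisely the reasoning the paper compresses into ``this is an immediate consequence of the previous Lemma'': the inclusion of feasible sets gives one inequality, and passing from $u$ to $|u|$ via Lemma~\ref{OnObstacleForSmallMass} gives the other. Your extra care in checking that $|u|\in\Honehat$ with the same mass is a worthwhile detail the paper leaves implicit.
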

\begin{proof}This is an immediate consequence of the previous Lemma.\end{proof}

The next result concerns continuity and coercivity of the energy functional.

\begin{Lmm}\label{CrcvWkLscOK} The energy functional $\EV$ is continuous over $\Honehat$, and the following hold for all $u\in\Honehat$:
\begin{align}\label{Crcv1}\EV(u)+C\ZT^2&\geq\int_{\RRR}\left[\frac{1}{4}\vert\nabla u\vert^2+ \frac{1}{2}u^2(1-u)^2\right]\d x+\frac{1}{2}\mathcal{D}(u),\end{align}
\begin{align}\label{Crcv2}\EV(u)+C\left[\ZT^2+\left(\int_{\RRR}\vert\nabla u \vert^2\d x\right)^3\right]&\geq\int_{\RRR}\left[\frac{1}{4}\vert\nabla u\vert^2+ \frac{1}{4}(u^2+u^4)\right]\d x+\frac{1}{2}\mathcal{D}(u).\end{align}
\end{Lmm}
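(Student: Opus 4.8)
The plan is to verify continuity term by term and to prove both coercivity bounds by controlling the attraction term $\int_{\RRR}Vu\d x$ by the Dirichlet and Coulomb terms. I first record the embeddings I will use repeatedly: the $\Honehat$-norm controls $\|u\|_{\mathcal{L}^p(\RRR)}$ for every $p\in[1,6]$ — for $p=1$ directly, for $p=6$ via the Sobolev bound $\|u\|_{\mathcal{L}^6}\le C\|\nabla u\|_{\Ltwo}$, and for $1<p<6$ by interpolation. For \emph{continuity}, the Dirichlet term is continuous in $\|\nabla\cdot\|_{\Ltwo}$; the double-well term $\tfrac12\int_{\RRR}u^2(1-u)^2\d x$ is a degree-four polynomial in $u$ and is continuous because convergence in $\Honehat$ forces convergence in $\mathcal{L}^2\cap\mathcal{L}^3\cap\mathcal{L}^4$, on which each map $u\mapsto\int u^k$ is continuous; the attraction term is Lipschitz, since splitting at $|x|=1$ and using Hardy's inequality with Cauchy--Schwarz gives $\int_{B_1}\tfrac{|w|}{|x|}\d x\le C\|\nabla w\|_{\Ltwo}$ while $\int_{B_1^c}\tfrac{|w|}{|x|}\d x\le\|w\|_{\Lone}$, so $|\int_{\RRR}V(u_n-u)\d x|\le C\|u_n-u\|_{\Honehat}$; and the Coulomb term is continuous because $|\mathcal{D}(u)-\mathcal{D}(v)|=|2\mathcal{D}(u-v,u+v)|\le C\|u-v\|_{\mathcal{L}^{6/5}}\|u+v\|_{\mathcal{L}^{6/5}}$ by Hardy--Littlewood--Sobolev.

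For \eqref{Crcv1}, after cancelling the terms common to both sides it suffices to prove $\int_{\RRR}Vu\d x\le\tfrac14\int_{\RRR}|\nabla u|^2\d x+\tfrac12\mathcal{D}(u)+C\ZT^2$. I would complete the square against a smeared nucleus: let $g:=\tfrac{\ZT}{|B_1|}\mathbbm{1}_{B_1}$, whose Newtonian potential $\phi:=g\ast|\cdot|^{-1}$ equals $V$ on $B_1^c$ and satisfies $0\le\phi\le V$ on $B_1$. Positive-definiteness of the Coulomb kernel yields $\int_{\RRR}u\phi\d x=2\mathcal{D}(u,g)\le\tfrac12\mathcal{D}(u)+2\mathcal{D}(g)$ with $2\mathcal{D}(g)=C\ZT^2$, which absorbs the far field into $\tfrac12\mathcal{D}(u)$. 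The near-field defect is then $\int_{B_1}u(V-\phi)\d x\le\ZT\int_{B_1}\tfrac{|u|}{|x|}\d x$, handled by Cauchy--Schwarz and Hardy: $\ZT\int_{B_1}\tfrac{|u|}{|x|}\d x\le\ZT|B_1|^{1/2}\bigl(\int_{\RRR}\tfrac{u^2}{|x|^2}\d x\bigr)^{1/2}\le 2\ZT|B_1|^{1/2}\|\nabla u\|_{\Ltwo}\le\tfrac14\int_{\RRR}|\nabla u|^2\d x+C\ZT^2$ after Young's inequality.

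For \eqref{Crcv2}, by \eqref{Crcv1} it is enough to show $\tfrac12\int_{\RRR}u^2(1-u)^2\d x+C\bigl(\int_{\RRR}|\nabla u|^2\d x\bigr)^3\ge\tfrac14\int_{\RRR}(u^2+u^4)\d x$. The pointwise identity $\tfrac12 t^2(1-t)^2-\tfrac14(t^2+t^4)=\tfrac14 t^2(t^2-4t+1)=:p(t)$ shows that $p\ge0$ outside the bounded interval $I=(2-\sqrt3,\,2+\sqrt3)$ and $p\ge-c_0$ on $I$ for a finite constant $c_0$. Hence $\int_{\RRR}p(u)\d x\ge-c_0\,|\{u\in I\}|$, and since $u>2-\sqrt3>0$ on that set, Chebyshev's inequality together with $\|u\|_{\mathcal{L}^6}^6\le C\bigl(\int_{\RRR}|\nabla u|^2\d x\bigr)^3$ gives $|\{u\in I\}|\le C\bigl(\int_{\RRR}|\nabla u|^2\d x\bigr)^3$, which is precisely the cubic gradient term.

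The main obstacle is the control of the attraction term in \eqref{Crcv1}: the Coulomb singularity at the origin and the slow $1/|x|$ decay at infinity must be handled by different tools — Hardy's inequality near $0$ and the self-energy of the smeared nucleus (via completing the square) far away — with the constants arranged so that only a quarter of the Dirichlet energy and half of the Coulomb energy are consumed, the residual cost being the $C\ZT^2$ self-energy. Everything else is routine, and the cubic power in \eqref{Crcv2} is seen to originate from the exponent $6$ in the Sobolev embedding $\dot{\mathcal{H}}^1\hookrightarrow\mathcal{L}^6$.
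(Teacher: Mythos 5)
Your proof is correct, but the key step --- bounding the attraction term in \eqref{Crcv1} --- is done by a genuinely different route than the paper's. The paper follows \cite{BenguriaBrezisLieb}: it splits the potential itself as $V=V\eta+V(1-\eta)$ with a cutoff $\eta$, estimates $\int V_1u\,\mathrm{d}x$ by H\"older and Sobolev (using $V_1\in\mathcal{L}^{6/5}$), and handles the tail by writing $\int V_2u\,\mathrm{d}x=\frac{1}{4\pi}\int(-\Delta V_2)\,(u\star|\cdot|^{-1})\,\mathrm{d}x$ and using the bound $\|u\star|\cdot|^{-1}\|_{\mathcal{L}^6}^2\le C\mathcal{D}(u)$; this yields $\int Vu\,\mathrm{d}x\le C\ZT(\|\nabla u\|_{\mathcal{L}^2}+\sqrt{\mathcal{D}(u)})$ and then Young's inequality. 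You instead smear the point charge into a uniform ball, invoke Newton's theorem, absorb the far field via Cauchy--Schwarz for the Coulomb bilinear form (costing the self-energy $C\ZT^2$), and kill the near-field defect with Hardy's inequality rather than Sobolev. Both arguments land on the same $\frac14\int|\nabla u|^2+\frac12\mathcal{D}(u)+C\ZT^2$ budget; yours avoids the $\mathcal{L}^6$ estimate on the potential $u\star|\cdot|^{-1}$ at the price of invoking Hardy and Newton, and arguably makes the physical origin of the $C\ZT^2$ term (nuclear self-energy) more transparent. For \eqref{Crcv2} your argument is the paper's in different clothing: the paper splits $\int u_+^3$ over the level sets $\{u_+\le 1/4\}$, $\{1/4\le u_+\le 4\}$, $\{u_+\ge 4\}$ and controls the middle set's measure by Chebyshev and $\|u\|_{\mathcal{L}^6}^6\le C\|\nabla u\|_{\mathcal{L}^2}^6$, which is exactly your bound $|\{u\in I\}|\le C(\int|\nabla u|^2)^3$ for the interval where $p(t)=\frac14t^2(t^2-4t+1)<0$. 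Your term-by-term continuity argument fills in what the paper dismisses as ``standard''; the only blemish there is the cosmetic factor in $\mathcal{D}(u)-\mathcal{D}(v)=\mathcal{D}(u-v,u+v)$, which you wrote with an extra $2$ but which changes nothing.
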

\begin{proof} The continuity of $\EV$ is standard.\newline
The proof of \eqref{Crcv1} is similar to that of Lemma 2 in~\cite{BenguriaBrezisLieb} for the TFW energy functional. Indeed, by $-\Delta \left(u\star\vert\cdot\vert^{-1}\right)=4\pi u$ and Sobolev's inequality, we have
\begin{align}\left\vert\left\vert u\star\vert\cdot\vert^{-1}\right\vert\right\vert_{\mathcal{L}^6(\RRR)}^2&\leq C\left\vert\left\vert\nabla \left(u\star\vert\cdot\vert^{-1}\right)\right\vert\right\vert^2_{\Ltwo}\\
&=C\mathcal{D}(u).\end{align}
Now, pick any smooth function $\eta:\RRR\to[0,1]$ for which $\mathbbm{1}_{B_1( {0})}\eta\equiv1$ and $\mathbbm{1}_{\RRR\setminus B_2( {0})}\eta\equiv0$, and define the pair of functions $V_1,V_2:\RRR\to\R$ by
\begin{align}V_1({x}):=V\eta\textit{ and }V_2({x}):=V(1-\eta).\end{align}
Then, by  $-\Delta \left(u\star\vert\cdot\vert^{-1}\right)=4\pi u$, H\"{o}lder's inequality, Sobolev's inequality, and Young's inequality, we have
\begin{align}\int_{\RRR}Vu&=\int_{\RRR}V_1u\d x+\int_{\RRR}V_2u\d x\\
&=\int_{\RRR}V_1u\d x+\frac{1}{4\pi}\int_{\RRR}(-\Delta V_2)u\star\vert\cdot\vert^{-1}\d x \\
&\leq C\ZT\left(\vert\vert u\vert\vert_{\mathcal{L}^6(\RRR)}+\left\vert\left\vert u\star\vert\cdot\vert^{-1}\right\vert\right\vert_{\mathcal{L}^6(\RRR)}\right)\\
&\leq C\ZT[ \vert\vert \nabla u\vert\vert_{\mathcal{L}^2(\RRR)}+ \sqrt{\mathcal{D}(u)}]\\
&\leq C\ZT^2+\frac{1}{4}\int_{\mathbb{R}^3}\vert \nabla u\vert^2+\frac{1}{2}\mathcal{D}(u).\end{align}
Equation \eqref{Crcv1} then follows.\newline
Next, to establish \eqref{Crcv2} we use \eqref{Crcv1} along with the following, which is established using basic properties of the distribution function and Sobolev's inequality:
\begin{align}\int_{\RRR}u_+^3\d x&\leq\frac{1}{4}\int_{\RRR\cap\{u_+\leq 1/4\}}u_+^2\d x+\int_{\RRR\cap\{1/4\leq u_+\leq 4\}}u_+^3\d x+\frac{1}{4}\int_{\RRR\cap\{1/4\leq u\}}u_+^4\d x\\
&\leq\frac{1}{4}\int_{\RRR}u_+^2\d x+C\vert\{1/4\leq u_+\leq 4\}\vert+\frac{1}{4}\int_{\RRR}u_+^4\d x\\
&\leq \frac{1}{4}\int_{\RRR}u_+^2\d x+C\int_{\RRR}u_+^6\d x+\frac{1}{4}\int_{\RRR}u_+^4\d x\\
&\leq \frac{1}{4}\int_{\RRR}u_+^2\d x+C\left(\int_{\RRR}\vert\nabla u_+ \vert^2\d x\right)^3+\frac{1}{4}\int_{\RRR}u_+^4\d x\\
&\leq \frac{1}{4}\int_{\RRR}u^2\d x+C\left(\int_{\RRR}\vert\nabla u \vert^2\d x\right)^3+\frac{1}{4}\int_{\RRR}u^4\d x\end{align}
\end{proof}

Using the previous Lemma, we obtain boundedness of minimizing sequences in $\Honehat$.

\begin{Cor}\label{MnSqBddOK}Let $\{u_n\}_{n\in\mathbb{N}}$ be a minimizing sequence for $\IV(M)$ . Then, $\{u_n\}_{n\in\mathbb{N}}$ is bounded in $\Honehat$.\end{Cor}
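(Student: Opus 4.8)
The plan is to combine the mass constraint built into $\HM$ with the coercivity bound~\eqref{Crcv1} from Lemma~\ref{CrcvWkLscOK}. The norm on $\Honehat$ is $\vert\vert\cdot\vert\vert_{\Lone}+\vert\vert\nabla\cdot\vert\vert_{\Ltwo}$, so it suffices to control these two contributions separately. The $\mathcal{L}^1$-part is immediate: every member of a minimizing sequence lies in $\HM$, so $\vert\vert u_n\vert\vert_{\Lone}=M$ for all $n$, which is already a uniform bound. It therefore remains only to bound $\vert\vert\nabla u_n\vert\vert_{\Ltwo}$ uniformly in $n$.

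Before doing so I would record that $\IV(M)$ is finite. Boundedness from below is contained in~\eqref{Crcv1}: discarding the nonnegative double-well and Coulomb terms on the right-hand side gives $\EV(u)\geq-C\ZT^2$ for every $u\in\Honehat$, hence $\IV(M)\geq-C\ZT^2>-\infty$. Boundedness from above follows by evaluating the energy on any fixed nonnegative $\mathcal{C}_0^{\infty}(\RRR)$ function normalized to have $\mathcal{L}^1$-mass $M$, which has finite energy. Since $\{u_n\}$ is a minimizing sequence, $\EV(u_n)\to\IV(M)$, and in particular the energies are uniformly bounded: $\EV(u_n)\leq\IV(M)+1$ for all large $n$, the finitely many remaining terms being finite.

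The key step is then a one-line application of~\eqref{Crcv1}. Because the double-well integrand $\tfrac12 u_n^2(1-u_n)^2$ and the Coulomb term $\tfrac12\mathcal{D}(u_n)$ are both nonnegative, dropping them yields
\begin{align}
\frac14\int_{\RRR}\vert\nabla u_n\vert^2\d x\leq\EV(u_n)+C\ZT^2\leq\IV(M)+1+C\ZT^2.
\end{align}
The right-hand side is a finite constant independent of $n$, so $\vert\vert\nabla u_n\vert\vert_{\Ltwo}$ is bounded. Together with $\vert\vert u_n\vert\vert_{\Lone}=M$ this gives a uniform bound on $\vert\vert u_n\vert\vert_{\Honehat}$, which is the claim.

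I do not expect a genuine obstacle: the content of~\eqref{Crcv1} is precisely that the attraction term $-\int_{\RRR}Vu$, the only term capable of driving the energy down while the gradient grows, has already been absorbed at the cost of the explicit constant $C\ZT^2$. The one point to state carefully is the nonnegativity of the two discarded terms, but the double-well term is a product of squares and $\mathcal{D}(u_n)\geq0$ because $u_n\geq0$ (and, more generally, because $\vert x\vert^{-1}$ is a positive-definite kernel). Note that the sharper estimate~\eqref{Crcv2} is not required here; it is instead what one would invoke later to obtain the $\mathcal{L}^2$- and $\mathcal{L}^4$-bounds needed in the weak-compactness and lower-semicontinuity arguments.
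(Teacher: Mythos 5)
Your proof is correct and follows exactly the route the paper intends: the paper's own justification is simply ``immediate consequence of the Lemma above,'' meaning precisely the application of the coercivity bound \eqref{Crcv1} to control $\Vert\nabla u_n\Vert_{\Ltwo}$, combined with the constraint $\Vert u_n\Vert_{\Lone}=M$ for the remaining part of the $\Honehat$-norm. Your write-up just makes explicit the finiteness of $\IV(M)$ and the nonnegativity of the discarded terms, which is exactly what the paper leaves implicit.
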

\begin{proof}This is an immediate consequence of the Lemma above.\end{proof}

In the case the is no background potential, we can say that the corresponding infimum is the zero function.

\begin{Lmm}\label{InftyFunctionalIsZero}$\Iz\equiv0$.\end{Lmm}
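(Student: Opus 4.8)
The plan is to establish the two inequalities $\Iz(M)\ge 0$ and $\Iz(M)\le 0$ separately, for every admissible mass $M>0$. Taking $\ZT=0$ forces $V\equiv0$, so the attraction term drops out and
\[
\Ez(u)=\int_{\RRR}\left[\frac{\vert\nabla u\vert^2}{2}+\frac12\,u^2(1-u)^2\right]\d x+\mathcal{D}(u).
\]
For the lower bound I would simply note that every competitor in the constraint set $\HM$ satisfies $u\ge0$, so all three summands are nonnegative: the gradient and double-well densities are manifestly $\ge0$, and $\mathcal{D}(u)\ge0$ because $u\ge0$ and the kernel $\vert x-y\vert^{-1}$ is positive. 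Hence $\Ez(u)\ge0$ on $\HM$, giving $\Iz(M)\ge0$.

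For the matching upper bound I would exhibit a mass-preserving family along which the energy tends to $0$. Fix any $\phi\in\mathcal{C}_0^\infty(\RRR)$ with $\phi\ge0$ and $\vert\vert\phi\vert\vert_{\Lone}=M$, and for $\lambda>0$ set $u_\lambda(x):=\lambda^{-3}\phi(x/\lambda)$. This dilation is $\mathcal{L}^1$-invariant and preserves positivity, so $u_\lambda\in\HM$ for all $\lambda$. A change of variables then gives the exact scalings
\[
\int_{\RRR}\vert\nabla u_\lambda\vert^2\d x=\lambda^{-5}\int_{\RRR}\vert\nabla\phi\vert^2\d x,\qquad \mathcal{D}(u_\lambda)=\lambda^{-1}\mathcal{D}(\phi),
\]
both of which vanish as $\lambda\to\infty$.

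The only term that is not scale-homogeneous is the double well, and this is the one place that needs a short argument. Since $\vert\vert u_\lambda\vert\vert_{\Linfty}=\lambda^{-3}\vert\vert\phi\vert\vert_{\Linfty}\to0$, for $\lambda$ large we have $0\le u_\lambda\le1$ everywhere, whence $(1-u_\lambda)^2\le1$ and
\[
\frac12\int_{\RRR}u_\lambda^2(1-u_\lambda)^2\d x\le\frac12\int_{\RRR}u_\lambda^2\d x=\frac{\lambda^{-3}}{2}\int_{\RRR}\phi^2\d x\xrightarrow[\lambda\to\infty]{}0.
\]
Combining the three estimates yields $\Ez(u_\lambda)\to0$, so $\Iz(M)\le0$; together with the lower bound this gives $\Iz(M)=0$ for every $M$, i.e. $\Iz\equiv0$.

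I do not expect a genuine obstacle here: the entire content of the proof is the choice of the spreading scaling $u_\lambda$, which dilutes a fixed mass over an ever larger region so that each of the three nonnegative contributions decays, together with the elementary observation that the double-well density is dominated by $u_\lambda^2$ once the amplitude is small. The mildest point of care is precisely this non-homogeneity of the double-well term, handled by the $\Linfty$ decay above.
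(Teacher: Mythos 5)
Your proof is correct and follows essentially the same route as the paper: the lower bound from nonnegativity of all three terms when $V\equiv0$, and the upper bound via the mass-preserving spreading scaling (your $u_\lambda=\lambda^{-3}\phi(\cdot/\lambda)$, $\lambda\to\infty$, is exactly the paper's $\sigma^3u(\sigma\cdot)$, $\sigma\to0^+$, under $\sigma=1/\lambda$). You simply spell out the scaling exponents and the $\mathcal{L}^\infty$ control of the double-well term that the paper leaves implicit.
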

\begin{proof}This follows immediately from $\Ez(u)\geq0$ and
\begin{equation}\Ez(\sigma^{3}u(\sigma\cdot))\xrightarrow[\sigma \rightarrow 0^+]{}0,\quad u\in\Honehat.\end{equation}\end{proof}

Now, we outline some properties of $\IV(m)$.

\begin{Lmm}\label{PrprtsIOK} The following hold:
\begin{enumerate}[(a)]
\item $m\in[0,\infty)\mapsto\IV(m)$ is continuous, nonincreasing, negative (except $\IV(0)=0$), and bounded below.
\item For each $M>0$, there exists $0<m\leq M$ such that $\IV(M)=\IV(m)$ and $\IV(m)$ is attained.
\end{enumerate}
\end{Lmm}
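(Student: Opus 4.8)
The plan is to prove the two parts in order, leaning on the coercivity estimate \eqref{Crcv1}, the vanishing of the free problem (Lemma~\ref{InftyFunctionalIsZero}), and the boundedness of minimizing sequences (Corollary~\ref{MnSqBddOK}). For part (a), boundedness below is immediate from \eqref{Crcv1}, which gives $\EV(u)\ge-C\ZT^2$ for every $u$ and hence $\IV(m)\ge-C\ZT^2$; and $\IV(0)=0$ because $u\equiv0$ is the only admissible function of zero mass, with $\EV(0)=0$. To see that $\IV$ is nonincreasing I would take a near-optimal $u$ of mass $m_1<m_2$ and append a bump of mass $m_2-m_1$ spread thinly over a large ball placed far from the origin; as the ball's radius and its distance to the origin tend to infinity, its gradient, double-well and self-Coulomb energies, its attraction, and its Coulomb interaction with $u$ all tend to $0$, so $\IV(m_2)\le\IV(m_1)$. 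For strict negativity I would use the mass-preserving dilation $u_\sigma=\sigma^3\phi(\sigma\cdot)$ of a fixed nonnegative $\phi\in\mathcal{C}_0^\infty(\RRR)$ of mass $m$: a direct computation shows that as $\sigma\to0^+$ the gradient, double-well and higher-order terms vanish faster than $\sigma$, so that $\EV(u_\sigma)=\sigma\big(\mathcal{D}(\phi)-\ZT\int_{\RRR}\phi/|x|\,\d x\big)+o(\sigma)$. Taking $\phi$ to be a smooth approximation of a uniform density on a small ball about the origin makes the bracket negative whenever the mass lies below a threshold proportional to $\ZT$, giving $\IV(m)<0$ for all small $m>0$; negativity for every $m>0$ then follows from monotonicity.

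Continuity I would obtain from the amplitude-free dilation $u_\lambda(x)=u(\lambda x)$, which has mass $\lambda^{-3}\|u\|_{\Lone}$ and under which the gradient, double-well, attraction and Coulomb terms pick up the explicit prefactors $\lambda^{-1},\lambda^{-3},\lambda^{-2},\lambda^{-5}$ respectively. Given $m_0$ and a near-minimizer $u$, rescaling to a nearby mass $m$ with $\lambda=(m_0/m)^{1/3}\to1$ shows $\limsup_{m\to m_0}\IV(m)\le\EV(u)$, whence upper semicontinuity. For lower semicontinuity I would rescale near-minimizers of the nearby masses back to mass $m_0$: since \eqref{Crcv1} together with the attraction estimate from the proof of Lemma~\ref{CrcvWkLscOK} bounds $\int_{\RRR}|\nabla u_n|^2$, $\int_{\RRR}u_n^2(1-u_n)^2$, $\mathcal{D}(u_n)$ and $\int_{\RRR}Vu_n$ uniformly along a minimizing sequence, the $\lambda$-dependent prefactors change the energy by $o(1)$ as $\lambda\to1$, giving $\IV(m_0)\le\liminf_{m\to m_0}\IV(m)$. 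The two bounds together yield continuity.

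For part (b) I would argue by concentration-compactness. Let $\{u_n\}$ be a minimizing sequence for $\IV(M)$; by Corollary~\ref{MnSqBddOK} it is bounded in $\Honehat$, so after passing to a subsequence $u_n\weak u$ with $u\ge0$ and $m:=\|u\|_{\Lone}\le M$. The heart of the matter is the energy splitting $\EV(u_n)=\EV(u)+\EV(u_n-u)+o(1)$: the gradient term splits by weak convergence, the double-well term by the Brezis-Lieb lemma (using that $\int_{\RRR}u_n^4$ is bounded by \eqref{Crcv2}), the Coulomb term because $\mathcal{D}$ is a positive quadratic form whose cross term $\mathcal{D}(u,u_n-u)$ vanishes as $u_n-u\weak0$, and the attraction term because $V=\ZT/|\cdot|$ is locally compact and decays at infinity, so that $\int_{\RRR}Vu_n\to\int_{\RRR}Vu$ and $\int_{\RRR}V(u_n-u)\to0$. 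Since the remainder carries no attraction in the limit and $\Ez\ge0$ pointwise, Lemma~\ref{InftyFunctionalIsZero} gives $\liminf_n\EV(u_n-u)\ge\Iz(M-m)=0$, hence $\EV(u)\le\IV(M)$. As $u$ has mass $m$ we also have $\EV(u)\ge\IV(m)\ge\IV(M)$ by monotonicity, so equality holds throughout: $\EV(u)=\IV(m)=\IV(M)$, i.e. $u$ attains $\IV(m)$. Finally $m>0$, since otherwise $u\equiv0$ would force $\IV(M)=\EV(0)=0$, contradicting the negativity proved in part (a).

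The main obstacle is the energy-splitting step in part (b). Establishing $\int_{\RRR}Vu_n\to\int_{\RRR}Vu$ requires combining Rellich-type local compactness near the origin (where $V$ is singular but belongs to $\mathcal{L}^{3/2}_{\mathrm{loc}}$) with the decay of $V$ at infinity tested against the uniform $\Lone\cap\mathcal{L}^6(\RRR)$ bounds on $u_n$; and the Brezis-Lieb decomposition of the quartic double-well term, together with the lower bound $\liminf_n\EV(u_n-u)\ge0$ for the escaping remainder, must be made rigorous uniformly in $n$. By contrast, the dilation estimates for (a) and the bump-at-infinity construction are routine once the uniform bounds furnished by \eqref{Crcv1}-\eqref{Crcv2} are in hand.
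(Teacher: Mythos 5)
Your proposal is correct and follows essentially the same route as the paper: monotonicity by sending the excess mass to infinity combined with $\Iz\equiv 0$, negativity from a one-parameter trial family in which the attraction term dominates at first order, boundedness below from \eqref{Crcv1}, and part (b) via a bounded minimizing sequence, its weak limit $u$ of mass $0<m\le M$, and the sandwich $\IV(m)\le\EV(u)\le\liminf_n\EV(u_n)=\IV(M)\le\IV(m)$. The only differences are cosmetic --- you use the mass-preserving dilation $\sigma^3\phi(\sigma\cdot)$ where the paper uses the amplitude scaling $\sigma u$, and you replace the paper's one-line appeal to weak lower semicontinuity by an explicit Brezis--Lieb splitting --- so in effect you supply details (continuity via dilations, convergence of the attraction term) that the paper dismisses as standard.
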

\begin{proof}
The continuity of $m\in[0,\infty)\mapsto\IV(m)$ follows from a standard argument based on the variational principle and appropriate trial states.\newline
Now, let us take $0<m'<m$ and show that $\IV(m)\leq\IV(m')$. Pick two smooth functions $u_1$ and $u_2$ with compact supports for which $\vert\vert u_1\vert\vert_{\Lone}=m'$ and $\vert\vert u_2\vert\vert_{\Lone}=m-m'$. Then, for any vector $x_0\in\RRR$ we have
\begin{equation}
\IV(m)\leq\lim_{n\to\infty}\EV(u_1(\cdot)+u_2(\cdot+nx_0)))=\EV(u_1)+\Ez(u_2).
\end{equation}
Then, we optimize the right-hand side of the equation above over all $u_1$ and $u_2$ and use Lemma \ref{InftyFunctionalIsZero} to conclude that $\IV(m)\leq\IV(m')+\Iz(m-m')=\IV(m')$.\newline
Negativity of $\IV(m)$ for $m>0$ follows from the nonincreasingness of $m\in[0,\infty)\mapsto\IV(m)$ and
\begin{equation}\EV(\sigma u)=-\sigma\int_{\RRR}Vu\d x+\sigma^2\left[\int_{\RRR}\left[\frac{\vert\nabla u\vert^2}{2}+\frac{1}{2}(u^2-2\sigma u^3+\sigma^2u^4)\right]\d x+\mathcal{D}(u)\right]<0,\end{equation}
for $0<\sigma\ll1$. In turn, boundedness from below of $\IV(m)$ is a direct consequence of equation \eqref{Crcv1}.\newline
Next, let $\{u_n\}_{n\in\mathbb{N}}$ be a minimizing sequence for $\IV (M)$. By Lemma \ref{MnSqBddOK}, this sequence is bounded in $\Hone$, hence, up to a subsequence, $u_n\rightharpoonup u_m$ in $\Hone$ and $u_n\to u_m$ almost everywhere in $\RRR$, for some $u_m\in\Hone$. Then, $m:=\vert\vert u_m\vert\vert_{\Lone}\leq M$, and since the energy functional is weakly lower semicontinuous in $\Hone$ and $\IV$ is nonincreasing,
\begin{align}\IV (m)\leq \EV(u_m)\leq\liminf_{n\to\infty}\EV(u_n)=\IV(M)\leq \IV (m).\end{align}
As a result, $\IV(M)=\IV(m)$, where $\IV(m)$ is attained at $u_m$. The reason why $m>0$ is that $\IV(M)<0$.
\end{proof}

The following Proposition is the last ingredient we need to prove part (a) of Theorem \ref{Th1}.

\begin{Prop}\label{NoSlnsZrEig}(Analogue of~\cite[Lemma 12]{BenguriaBrezisLieb}) If $u\in\Honehat$ satisfies
\begin{equation}\label{closeToELZTOK}-\Delta u+u-3u^2+2u^3-V+\vert u\vert\star\vert\cdot\vert^{-1}\geq0,\end{equation}
then $M:=\vert\vert u\vert\vert_{\Lone}\geq\ZT$.
\end{Prop}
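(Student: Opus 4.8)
The plan is to extract from the hypothesis a purely electrostatic balance and then read off the total charge from the decay of the Coulomb potential. First I would note that $g(u):=u-3u^2+2u^3$ is exactly the derivative of the double-well integrand $\tfrac12u^2(1-u)^2$, and rewrite \eqref{closeToELZTOK} as
\[
-\Delta u+g(u)\geq\Phi,\qquad \Phi:=V-\vert u\vert\star\vert\cdot\vert^{-1},
\]
where $\Phi$ is the total electrostatic potential and satisfies $-\Delta\Phi=4\pi(\ZT\delta_0-\vert u\vert)$ distributionally. Since $\Honehat$ embeds in $\mathcal L^6(\RRR)$ and $u\in\Lone$, interpolation gives $u\in\mathcal L^p(\RRR)$ for all $p\in[1,6]$; hence $g(u)\in\Lone$ and $\vert u\vert\star\vert\cdot\vert^{-1}\in\Linfty$, so $\Phi\in\mathcal L^1_{\mathrm{loc}}(\RRR)$ and every integral below is finite.

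Next I would integrate the inequality over the ball $B_R$. Reading the hypothesis as the statement that $-\Delta u+g(u)-\Phi$ is a nonnegative measure and testing against $\mathbbm 1_{B_R}$ (legitimate for a.e.\ $R$, via the Gauss–Green formula for $u$, whose distributional Laplacian is a measure) yields
\[
\int_{B_R}\Phi\d x\;\leq\;\int_{B_R}g(u)\d x-\int_{\partial B_R}\partial_r u\,\mathrm{d}S .
\]
The first term converges to the finite number $\int_{\RRR}g(u)\d x$. For the boundary term I would exploit $\nabla u\in\Ltwo$: since $\int_0^\infty\big(\int_{\partial B_R}\vert\nabla u\vert^2\,\mathrm{d}S\big)\,\mathrm{d}R=\Vert\nabla u\Vert_{\Ltwo}^2<\infty$, one has $\liminf_{R\to\infty}R\int_{\partial B_R}\vert\nabla u\vert^2\,\mathrm{d}S=0$, so along a suitable sequence $R_n\to\infty$ the Cauchy–Schwarz bound $\big\vert\int_{\partial B_{R_n}}\partial_r u\,\mathrm{d}S\big\vert\leq(4\pi)^{1/2}R_n\big(\int_{\partial B_{R_n}}\vert\nabla u\vert^2\,\mathrm{d}S\big)^{1/2}=o(R_n^{1/2})$ holds. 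Choosing $R_n$ among the good radii then gives $\int_{B_{R_n}}\Phi\d x\leq o(R_n^{2})$.

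The heart of the matter is a matching lower bound for $\int_{B_R}\Phi\d x$. By Newton's theorem the spherical average of $\vert u\vert\star\vert\cdot\vert^{-1}$ at radius $r$ equals $Q(r)/r+\int_{\vert y\vert\geq r}\vert u(y)\vert\,\vert y\vert^{-1}\d y$, with $Q(r):=\int_{B_r}\vert u\vert$, so that
\[
\int_{B_R}\Phi\d x=4\pi\!\int_0^R(\ZT-Q(r))\,r\,\mathrm{d}r-E(R),\quad E(R)=\tfrac{4\pi}{3}\Big(\int_{B_R}\vert y\vert^2\vert u\vert\d y+R^3\!\!\int_{\vert y\vert>R}\tfrac{\vert u\vert}{\vert y\vert}\d y\Big).
\]
Since $Q(r)\leq M$, the main term is at least $2\pi(\ZT-M)R^2$. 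For the error, $R^{-2}\int_{B_R}\vert y\vert^2\vert u\vert\d y=\int_{B_R}\tfrac{\vert y\vert^2}{R^2}\vert u\vert\d y\to0$ by dominated convergence (integrand dominated by $\vert u\vert\in\Lone$, tending to $0$ pointwise), and $R^3\int_{\vert y\vert>R}\vert u\vert\,\vert y\vert^{-1}\d y\leq R^2\int_{\vert y\vert>R}\vert u\vert\d y=o(R^2)$; hence $E(R)=o(R^2)$ and $\int_{B_R}\Phi\d x\geq2\pi(\ZT-M)R^2-o(R^2)$ for every $R$.

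Comparing the two estimates along $R_n$ gives $2\pi(\ZT-M)R_n^2-o(R_n^2)\leq o(R_n^2)$; dividing by $R_n^2$ and letting $n\to\infty$ forces $\ZT\leq M$, as claimed. The analytic inputs (Newton's theorem, the dominated-convergence estimate, the fact that the double-well nonlinearity contributes only an $O(1)$ term and is therefore invisible at the $R^2$ scale) are routine. The delicate point is the boundary term $\int_{\partial B_R}\partial_r u$, which cannot be controlled for every $R$ but only along a sequence of radii, so the argument must be arranged so that the for-all-$R$ lower bound on $\int_{B_R}\Phi$ is evaluated precisely along that sequence; securing enough regularity of $u$ to justify the Gauss–Green integration over $B_R$ is the other step requiring care.
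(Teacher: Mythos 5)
Your proof is correct, but it takes a genuinely different route from the paper's. The paper multiplies \eqref{closeToELZTOK} by smooth cutoffs $\xi_n=\xi(n^{-1}\cdot)$ supported in the annuli $B_{2n}\setminus B_n$ and integrates by parts: because $\nabla u\in\Ltwo$ and $u\in\mathcal{L}^2\cap\mathcal{L}^4\cap\mathcal{L}^6$, the gradient and nonlinear terms contribute only $o(n^{1/2})$ and $o(n^{3/2})$ on these far-out annuli, while Newton's inequality $\overline{\vert u\vert}\star\vert\cdot\vert^{-1}\le M/\vert x\vert$ makes the electrostatic side at least $(\ZT-M)\,n^2\int\xi/\vert x\vert$, and comparing exponents forces $M\ge\ZT$. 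You instead test against $\mathbbm{1}_{B_R}$, which buys a very clean right-hand side ($\int_{B_R}g(u)=O(1)$ plus a flux term) and uses the exact Newton decomposition with the error $E(R)=o(R^2)$ rather than just the inequality; the price is that you must handle the Gauss--Green pairing of the measure $-\Delta u$ with an indicator and control $\int_{\partial B_R}\partial_r u\,\mathrm{d}S$, which is only possible along a sequence of good radii (and you must also check, as you do implicitly, that such radii can be chosen among those where the divergence theorem is valid). The paper's smooth annular cutoff sidesteps both issues entirely --- no boundary term, no global integrability of $g(u)$ needed, only the decay of $\Vert\nabla u\Vert_{\mathcal{L}^2(B_{2n}\setminus B_n)}$ and $\Vert u\Vert_{\mathcal{L}^{2r}(B_{2n}\setminus B_n)}$ --- at the cost of a slightly less transparent "compare $o(n^{3/2})$ against $n^2$" bookkeeping. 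Both arguments rest on the same mechanism: the unscreened net charge $\ZT-M$ produces a potential of order $(\ZT-M)/\vert x\vert$ at infinity whose integral over a region of scale $R$ grows like $R^2$ and dominates every other term.
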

\begin{proof}

Let us pick any smooth radial nontrivial function $\xi:\RRR\to[0,1]$ satisfying
\begin{equation}\supp\xi\subset B_2( {0})\setminus B_1( {0}),\end{equation}
and define the sequence of functions $\{\xi_n\}_{n\in\mathbb{N}}:=\{\xi(n^{-1}{x})\}_{n\in\mathbb{N}}$ defined over $\mathbb{R}^3$ and so that
\begin{align}supp\ \xi_n\subset B_{2n}\setminus B_n, \quad n\in\mathbb{N}.\end{align}
We multiply both sides of inequality \eqref{closeToELZTOK} by $\xi_n$ to obtain the family of inequalities
\begin{align}\label{EqnTmsXi}-\Delta u \xi_n+ (u-3 u ^2+2 u ^3)\xi_n\geq(V-\vert u\vert\star\vert\cdot\vert^{-1})\xi_n,\quad n\in\mathbb{N}.\end{align}
On the other hand, we apply H\"{o}lder's inequality to estimate terms on the left-hand side of \eqref{EqnTmsXi} in terms of $n$ as follows:
\begin{align}\left\vert\int_{\RRR}(-\Delta u )\xi_n \d x\right\vert&=\left\vert\int_{B_{2n}( {0})\setminus B_{n}( {0})}(-\Delta u )\xi_n \d x\right\vert\\
&=\left\vert\int_{B_{2n}( {0})\setminus B_{n}( {0})}\nabla u \cdot\nabla\xi_n\d x\right\vert\\
&\leq \vert\vert\nabla u\vert\vert_{\mathcal{L}^2(B_{2n}( {0})\setminus B_{n}( {0}))}\vert\vert\nabla\xi_n\vert\vert_{\mathcal{L}^2(B_{2n}( {0})\setminus B_{n}( {0}))} \\
&=\sqrt{n}\vert\vert\nabla u\vert\vert_{\mathcal{L}^2(B_{2n}( {0})\setminus B_{n}( {0}))}\vert\vert\nabla\xi\vert\vert_{\mathcal{L}^2(B_{2n}( {0})\setminus B_{n}( {0}))}\\
&=\epsilon_n^1\sqrt{n},\end{align}
with $\epsilon_n^1\ntends0$,
\begin{align}\left\vert\int_{\RRR} u ^r\xi_n \d x\right \vert &=\left\vert\int_{B_{2n}( {0})\setminus B_{n}( {0})} u ^r\xi_n\d x\right\vert\\
&\leq\int_{B_{2n}( {0})\setminus B_{n}( {0})}\vert u\vert^r\d x\\
&\leq \vert\vert u^{r}\vert\vert_{\mathcal{L}^2(B_{2n}( {0})\setminus B_{n}( {0}))}\vert\vert 1\vert\vert_{\mathcal{L}^2(\RRR)(B_{2n}( {0})\setminus B_{n}( {0}))}\\
&=\epsilon_n^2n^{\frac{3}{2}},\quad r=1,2,3,\end{align}
with $\epsilon_n^2\ntends0$.

As for the right-hand side of equation \eqref{EqnTmsXi}, we note that
\begin{align}\label{Vtrm}\int_{\RRR}(V-\vert u\vert\star\vert\cdot\vert^{-1})\xi_n\d x&=\int_{\RRR}(\overline{V}-\overline{\vert u\vert\star\vert\cdot\vert^{-1}})\xi_n\d x\\
&=\int_{\RRR}\left(\frac{\ZT}{\vert x\vert }-\overline{\vert u\vert}\star\vert\cdot\vert^{-1}\right)\xi_n({x})\d x\end{align}
where the line over functions corresponds to their spherical average. Moreover, by equation (35) in \cite{LiebSimonsTF}, we have that
\begin{align}\overline{\vert u\vert}\star\vert\cdot\vert^{-1}(x)\leq\frac{M}{\vert x\vert},\quad \vert x\vert>0.\end{align}
As a result, the following holds for $n$ sufficiently large
\begin{align}(\ZT-M)\int_{\RRR}\frac{\xi_n({x})}{\vert{x}\vert}\d x\leq \int_{\RRR}(V-\vert u\vert\star\vert\cdot\vert^{-1})\xi_n\d x.\end{align}
Besides, we can compute for each $n\in\N$
\begin{align}\int_{\RRR}\frac{\xi_n({x})}{\vert{x}\vert}\d x=n^{-1}\int_{\RRR}\frac{\xi(n^{-1}{x})}{\vert n^{-1}{x}\vert}\d x=n^2\int_{\RRR}\frac{\xi(x)}{\vert x\vert}\d x.\end{align}
Consequently, \eqref{EqnTmsXi} holds only if $M\geq\ZT$. This concludes the proof. \end{proof}

Next, we relate the $\mathcal{L}^2$ and the $\mathcal{L}^1$-norms of a minimizer. We will need the following estimates as part of the proof of part (b) of Theorem \ref{Th1}.

\begin{Lmm}
	\label{relating L2 and L1 norms - lemma}
	Assume there exists a minimizer $u\in \Honehat$ with $\|u\|_{ \Lone }=M$. Then it holds
	\begin{align}
		\|u\|_{ \Ltwo }^2&\le 2(\ZT+2)M+ 8\pi \ZT^2,		\label{relating L2 and L1 norms - equation}\\
		\mathcal{D}(u) & \le 2(\ZT+1)M+ 8\pi \ZT^2.
				\label{relating D and L2 norms - equation}
	\end{align}
\end{Lmm}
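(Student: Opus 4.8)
The plan is to bypass the Euler--Lagrange equation entirely and argue directly from the fact that a minimizer has nonpositive energy. Since $u$ minimizes $\IV(M)$ and $\IV(M)<0$ by Lemma~\ref{PrprtsIOK}, we have $\EV(u)\le 0$. Writing this out and discarding the nonnegative gradient term where convenient gives the single ``master'' inequality
\begin{align}
\frac12\int_{\RRR}|\nabla u|^2\d x+\frac12\int_{\RRR}u^2(1-u)^2\d x+\mathcal D(u)\le \ZT\int_{\RRR}\frac{u}{|x|}\d x,
\end{align}
from which I read off both $\mathcal D(u)\le \ZT\int u/|x|\,\d x$ and $\int u^2(1-u)^2\d x\le 2\ZT\int u/|x|\,\d x$. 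Thus everything is reduced to controlling the attraction integral $\ZT\int u/|x|\,\d x$.

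Next I would convert the double-well integral into a bound on $\|u\|_{\Ltwo}^2$ by an elementary pointwise inequality: for every $s\ge 0$ one has $s^2\le 2s+s^2(1-s)^2$, since this is equivalent to $2s^2-s^3\le 2$ and $\max_{s\ge 0}(2s^2-s^3)=\tfrac{32}{27}<2$. Applying it to $u\ge 0$ and integrating, with $\|u\|_{\Lone}=M$, yields
\begin{align}
\|u\|_{\Ltwo}^2\le 2M+\int_{\RRR}u^2(1-u)^2\d x.
\end{align}
Together with the master estimates, both target quantities are now controlled by $M$ and the single attraction integral; note the extra ``$2M$'' here is exactly what will distinguish the constant $2(\ZT+2)$ in the $\Ltwo$ bound from $2(\ZT+1)$ in the $\mathcal D$ bound.

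The core of the argument is to estimate $\ZT\int u/|x|\,\d x$. Because $|x|^{-1}\notin\Ltwo$ this cannot be bounded by $M$ alone, so I would split $\RRR=B_1\cup(\RRR\setminus B_1)$. On the exterior, $|x|^{-1}\le 1$ gives a contribution $\le \ZT M$. On $B_1$ I would use Young's inequality against $|x|^{-1}$ with a free parameter $\lambda$, together with $\int_{B_1}|x|^{-2}\d x=4\pi$, to obtain a bound of schematic form $\ZT\int u/|x|\,\d x\le \ZT M+\tfrac{\lambda}{2}\|u\|_{\Ltwo}^2+\tfrac{2\pi\ZT^2}{\lambda}$. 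Inserting this into the two estimates above makes $\|u\|_{\Ltwo}^2$ reappear on the right; choosing $\lambda$ so that its coefficient is strictly below $1$ lets me absorb it into the left-hand side and solve, after which the explicit constants $2(\ZT+2)M+8\pi\ZT^2$ and $2(\ZT+1)M+8\pi\ZT^2$ fall out.

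The main obstacle is precisely this self-referential coupling: the singularity of the attraction forces $\|u\|_{\Ltwo}$ (equivalently $\mathcal D(u)$) to reappear inside its own bound, so the delicate point is to balance the cut-off scale and the Young parameter so that the absorbed coefficient stays below $1$ \emph{and} the explicit constants come out exactly as stated. Reproducing the sharp factor $8\pi\ZT^2$ in particular is where I expect the real work to lie; a crude Hölder/Young split tends to produce a larger multiple of $\ZT^2$, and matching the stated value will likely require a sharper treatment of the near-origin part, for instance completing the square in the Coulomb inner product against a smeared nuclear charge (so that the $\mathcal D(u)$ generated by the attraction cancels the $\mathcal D(u)$ already on the left via Newton's theorem, as used in~\cite{FrankNamVanDenBosch}) before applying the remaining absorption.
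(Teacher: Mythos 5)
Your proposal is essentially the paper's own proof: both hinge on $\EV(u)\le 0$, on the observation $\int_{\RRR}u^2\d x\le 2M+\int_{\RRR}u^2(1-u)^2\d x$ (you via the pointwise inequality $s^2\le 2s+s^2(1-s)^2$, the paper via splitting at $u=2$), and on bounding $\ZT\int_{\RRR}u/|x|\d x$ by $\ZT M$ outside $B_1$ plus a Young-type estimate against $\int_{B_1}|x|^{-2}\d x=4\pi$ inside, after which the resulting $\|u\|_{\Ltwo}^2$ is absorbed into the left-hand side. The sharper treatment you anticipate needing for the constant $8\pi\ZT^2$ (smeared nuclear charge, Newton's theorem) is not used: the paper's crude split already produces the stated constants --- though only because it silently drops the factor $\tfrac12$ in front of the double-well term, so your more careful bookkeeping would land on roughly doubled constants, which is harmless since the downstream argument only needs these bounds to be linear in $M$.
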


\begin{proof}
Note that
\begin{align}
\int_{ \mathbb{R}^3 } u^2 \d x & =
\int_{ \mathbb{R}^3 \cap \{u\ge 2\} } u^2 \d x
+\int_{ \mathbb{R}^3 \cap \{u< 2\} } u^2 \d x\notag\\
&\le
\int_{ \mathbb{R}^3 \cap \{u\ge 2\} } u^2(u-1)^2 \d x
+2\int_{ \mathbb{R}^3 \cap \{u< 2\} } u \d x\\
&\le
\int_{ \mathbb{R}^3 } u^2(u-1)^2 \d x
+2M
.\label{L2 square control}
\end{align}
Since clearly
\begin{align}
\int_{ \mathbb{R}^3 } u^2(u-1)^2 \d x +\mathcal{D}(u) & \le \underbrace{ \mathcal{E}_\ZT(u)}_{\le 0}+\ZT\int_{ B_1 } \frac{u}{|x|}\d x
+\ZT\underbrace{\int_{ \mathbb{R}^3\setminus B_1 } \frac{u}{|x|}\d x }_{\le M}\notag\\
&
\le \ZT M+\ZT\int_{ B_1 } \frac{u^2}{2\ZT}+ \frac{\ZT}{|x|^2}\d x \\
&=\ZT M+ \frac{\|u\|_{ \mathcal{L}^2(B_1)}^2}{2}+ \ZT^2\int_{ B_1 }  \frac{1}{|x|^2}\d x  \notag\\
&\le \ZT M+ \frac{\|u\|_{ \mathcal{L}^2(B_1)}^2}{2}+ 4\pi \ZT^2\\
&\le \ZT M+ \frac{\|u\|_{ \Ltwo }^2}{2}+ 4\pi \ZT^2,\label{double control}
\end{align}
which, plugged into \eqref{L2 square control}, gives
\begin{align*}
\int_{ \mathbb{R}^3 } u^2 \d x & \le
\int_{ \mathbb{R}^3 } u^2(u-1)^2 \d x
+2M
\le (\ZT+2)M+ \frac{\|u\|_{ \Ltwo }^2}{2}+ 4\pi \ZT^2,\end{align*}
so that
\begin{align*}
    \|u\|_{ \Ltwo }^2\le
2(\ZT+2)M+ 8\pi \ZT^2,
\end{align*}
hence \eqref{relating L2 and L1 norms - equation} is proven. Combining \eqref{double control} and \eqref{relating L2 and L1 norms - equation}
gives
\eqref{relating D and L2 norms - equation},
concluding the proof.
%
\end{proof}

\begin{Lmm}(Improved version of the previous Lemma)\label{relating L2 and L1 norms - improved lemma}
Assume there exists a minimizer $u\in \Honehat$ with $\|u\|_{ \Lone }=M$. Then it holds
\begin{align}\label{relating L2 and L1 norms - improved equation}\vert\vert u\vert\vert_{\Ltwo}^2&\leq C(\ZT^2+\ZT^6),\\
\label{relating D and L2 norms - improved equation}\mathcal{D}(u)&\leq C\ZT^2.\end{align}
\end{Lmm}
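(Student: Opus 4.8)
The plan is to bypass the mass-dependent estimate of the previous lemma entirely and read the desired bounds straight off the coercivity inequalities \eqref{Crcv1} and \eqref{Crcv2}. The crucial observation is that both of those inequalities hold for \emph{every} $u\in\Honehat$ and that their right-hand sides involve only $\ZT$, $\int|\nabla u|^2$, $\mathcal{D}(u)$ and the (nonnegative) potential-energy integrands --- never the mass $M=\|u\|_{\Lone}$. Consequently, to obtain $M$-free bounds for a minimizer $u$ of $\IV(M)$, I would only need to know that its energy is nonpositive: since $\EV(u)=\IV(M)$ and $\IV(M)<0$ for $M>0$ by Lemma \ref{PrprtsIOK}, we have $\EV(u)\le 0$, and everything follows by substitution.

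First I would establish \eqref{relating D and L2 norms - improved equation}. Substituting $\EV(u)\le 0$ into \eqref{Crcv1} gives
\[
\tfrac14\int_{\RRR}|\nabla u|^2\d x+\tfrac12\int_{\RRR}u^2(1-u)^2\d x+\tfrac12\mathcal{D}(u)\le C\ZT^2 .
\]
Since the three terms on the left are nonnegative, each is separately bounded by $C\ZT^2$. In particular $\mathcal{D}(u)\le C\ZT^2$, which is exactly \eqref{relating D and L2 norms - improved equation}, and --- equally important for the next step --- $\int_{\RRR}|\nabla u|^2\d x\le C\ZT^2$.

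Next I would feed this gradient bound into \eqref{Crcv2}. Using again $\EV(u)\le 0$ together with $\int_{\RRR}|\nabla u|^2\d x\le C\ZT^2$, the right-hand side of \eqref{Crcv2} is controlled by $C[\ZT^2+(C\ZT^2)^3]\le C(\ZT^2+\ZT^6)$, so that
\[
\tfrac14\int_{\RRR}(u^2+u^4)\d x\le C(\ZT^2+\ZT^6),
\]
and in particular $\|u\|_{\Ltwo}^2=\int_{\RRR}u^2\d x\le C(\ZT^2+\ZT^6)$, which is \eqref{relating L2 and L1 norms - improved equation}. The sixth power of $\ZT$ is precisely the cost of cubing the $\ZT^2$ gradient bound, matching the cubic term $(\int|\nabla u|^2)^3$ in \eqref{Crcv2}; note that the double-well integrand $u^2(1-u)^2$ alone cannot yield an $\mathcal{L}^2$ bound because it degenerates near $u=1$, which is exactly why the $u^4$-coercivity of \eqref{Crcv2} is needed here rather than \eqref{Crcv1}.

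In this route there is essentially no analytical obstacle beyond the recognition that the $M$-dependence in the previous lemma was an artifact of the crude far-field estimate $\int_{\RRR\setminus B_1}u/|x|\,\d x\le M$ used there: the coercivity inequalities already dispose of the attraction term through Sobolev and Young inequalities in an $M$-free manner, so the only thing to verify carefully is the sign $\EV(u)\le 0$, which is immediate from Lemma \ref{PrprtsIOK}. (If one instead preferred to improve the earlier argument in place, the single required change would be to replace that far-field bound by a Newton's-theorem estimate: choosing a radial probability density $\rho$ supported in $B_1$, one has $1/|x|=\rho\star|\cdot|^{-1}$ for $|x|>1$, whence $\int_{\RRR\setminus B_1}u/|x|\,\d x\le 2\mathcal{D}(u,\rho)\le C\sqrt{\mathcal{D}(u)}$ by Cauchy--Schwarz; but the coercivity route above makes this detour unnecessary.)
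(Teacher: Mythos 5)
Your proof is correct and follows exactly the route the paper intends: the paper's own proof is the one-line remark that the bounds are ``an immediate consequence of the nonpositivity of $\IV(M)=\EV(u)$, and equations \eqref{Crcv1} and \eqref{Crcv2},'' and your argument is precisely the natural expansion of that remark (first extracting $\mathcal{D}(u)\le C\ZT^2$ and $\int|\nabla u|^2\le C\ZT^2$ from \eqref{Crcv1}, then feeding the gradient bound into \eqref{Crcv2} to get the $\ZT^2+\ZT^6$ bound on $\|u\|_{\Ltwo}^2$).
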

\begin{proof}These are an immediate consequence of the nonpositivity of $\IV(M)=\EV(u)$, and equations \eqref{Crcv1} and \eqref{Crcv2}.\end{proof}

We finalize this section by establishing estimates that play a central role in the proof of part (b) of Theorem \ref{Th1}. The main idea is to use use localization functions to extract information on how the mass of minimizers is distributed in $\RRR$.

We will use a suitably modified version of Lemmas 3.1 and 3.2 from \cite{FrankNamVanDenBosch}.

\begin{Lmm}\label{analogue of Lemma 3.1 of Frank-Nam-Van Den Bosch}
	(Analogue of \cite[Lemma~3.1]{FrankNamVanDenBosch})
	For all smooth partitions of unity $f_i:\mathbb{R}^3 \longrightarrow [0,1]$, $i=1,\cdots,n$,
	such that $\sum_{i=1}^{n}f_i^2 =1$, $\nabla f_i \in  \Linfty $, and for all
	$u :\mathbb{R}^3 \longrightarrow [0,+\infty]$ such that $u \in H^1(\mathbb{R}^3)$,
	it holds
	\begin{align*}
	\sum_{i=1}^n \mathcal{E}_\ZT (f_i^2 u) -\mathcal{E}_\ZT ( u)
	&\le
		\sum_{i=1}^n \mathcal{D}(f_i^2 u) -\mathcal{D}( u)\\
		&\qquad
		+\Big[\sum_{i=1}^n\| \nabla f_i \|_{  \Linfty }^2\Big]\int_A u^2 \d x
		+
		 \min\bigg\{  4\int_{A} u^2\d x,
		8\int_{A} u\d x \bigg\},
	\end{align*}
	where
	$A:=\bigcup_{i=1}^n \{0<f_i<1\}$.
\end{Lmm}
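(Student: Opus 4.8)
The plan is to expand the difference $\sum_{i=1}^n \mathcal{E}_\ZT (f_i^2 u) -\mathcal{E}_\ZT ( u)$ according to the four groups of terms making up $\mathcal{E}_\ZT$ — kinetic, double-well, attraction, and Coulomb — and to estimate each group separately, using the partition identity $\sum_{i=1}^n f_i^2\equiv 1$ (and its consequence $\sum_i f_i\nabla f_i\equiv 0$) throughout. Two of the four groups are essentially free. Since the attraction term is linear in the localized density and $\sum_i f_i^2 u=u$, one has $\sum_i \int_{\RRR} V f_i^2 u\d x=\int_{\RRR}Vu\d x$, so the attraction contributions cancel exactly. The Coulomb contribution $\sum_i\mathcal{D}(f_i^2u)-\mathcal{D}(u)$ is not estimated at all: it is carried over to the right-hand side, where it appears verbatim.

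For the double-well term I would argue pointwise. Fix $x$, write $p_i:=f_i^2(x)\in[0,1]$ with $\sum_i p_i=1$, and $t:=u(x)\ge 0$, and expand $\tfrac12 s^2(1-s)^2=\tfrac12 s^2-s^3+\tfrac12 s^4$. A direct computation gives
\[
\sum_i \tfrac12 (p_i t)^2(1-p_i t)^2-\tfrac12 t^2(1-t)^2
=-\tfrac12 t^2(1-\sigma_2)+t^3(1-\sigma_3)-\tfrac12 t^4(1-\sigma_4),
\]
where $\sigma_k:=\sum_i p_i^k$. Because $0\le p_i\le 1$, the moments satisfy $\sigma_4\le\sigma_3\le\sigma_2\le\sigma_1=1$, so the quadratic and quartic terms are nonpositive and only the cubic term can be positive. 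Bounding it by $t^3(1-\sigma_4)$ and factoring yields the upper estimate $(1-\sigma_4)\,t^3(1-\tfrac{t}{2})$, which is $\le 0$ for $t\ge 2$ and is $\le t^3\le\min\{2t^2,4t\}$ for $0\le t\le 2$. Hence the integrand is bounded pointwise by $\min\{2u^2,4u\}$, and since it vanishes wherever some $f_i\equiv 1$ (i.e.\ off $A$), integrating over $A$ produces the term $\min\{4\int_A u^2,8\int_A u\}$.

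The kinetic term is the crux and the step I expect to be the main obstacle. From $\nabla(f_i^2 u)=f_i^2\nabla u+2uf_i\nabla f_i$ one obtains
\[
\sum_i\lvert\nabla(f_i^2 u)\rvert^2-\lvert\nabla u\rvert^2=(S-1)\lvert\nabla u\rvert^2+4u\,\nabla u\cdot \mathbf{w}+4u^2\sum_i f_i^2\lvert\nabla f_i\rvert^2,
\]
with $S:=\sum_i f_i^4\le 1$ and $\mathbf{w}:=\sum_i f_i^3\nabla f_i$. Unlike the usual IMS localization by $f_iu$ — for which $\sum_i f_i^2=1$ forces an exact cancellation — localizing by $f_i^2u$ leaves the indefinite cross term $4u\,\nabla u\cdot\mathbf w$, and this is the delicate point: it cannot be controlled pointwise by a multiple of $u^2$. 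The idea is to spend the nonpositive quantity $(S-1)\lvert\nabla u\rvert^2$ to absorb it. Using $\sum_i f_i\nabla f_i=0$ to rewrite $\mathbf w=-\sum_i f_i(1-f_i^2)\nabla f_i$, Cauchy--Schwarz gives $\lvert\mathbf w\rvert^2\le(1-S)\sum_i(1-f_i^2)\lvert\nabla f_i\rvert^2$; Young's inequality $4u\,\nabla u\cdot\mathbf w\le(1-S)\lvert\nabla u\rvert^2+\tfrac{4u^2\lvert\mathbf w\rvert^2}{1-S}$ then cancels the $\lvert\nabla u\rvert^2$ contributions exactly and leaves a bound by $Cu^2\sum_i\lvert\nabla f_i\rvert^2$. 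Since $\nabla f_i$ is supported in $A$ and $\sum_i\lvert\nabla f_i\rvert^2\le\sum_i\lVert\nabla f_i\rVert_{\Linfty}^2$, integrating yields the kinetic term $\big[\sum_i\lVert\nabla f_i\rVert_{\Linfty}^2\big]\int_A u^2$.

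The only genuine care is in tracking the constant: a crude triangle-inequality bound on $\nabla(f_i^2u)$, or a careless application of Young, overshoots by an absolute factor, whereas the cancellation above — driven entirely by the constraint $\sum_i f_i\nabla f_i=0$ and the exact localization identity — keeps the coefficient at the claimed value. Collecting the four groups, and recalling that the attraction terms cancel and the Coulomb terms are carried over, then completes the proof.
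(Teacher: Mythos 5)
Your proposal is correct in outline and follows the same term-by-term strategy as the paper: the attraction contributions cancel exactly because $\sum_i f_i^2 u = u$, the Coulomb difference is carried to the right-hand side untouched, and your pointwise moment computation for the double-well term is essentially the paper's own computation (the paper bounds the integrand by $2u^3\mathbbm{1}_{\{u\le 2\}}$ and then by $\min\{4u^2,8u\}$; your bound $(1-\sigma_4)t^3(1-t/2)\le t^3\le\min\{2t^2,4t\}$ on $\{t\le2\}$ is the same estimate with a slightly better constant). The genuine divergence is in the gradient term, and there your treatment is actually more careful than the paper's. The paper simply invokes ``the IMS formula'' for the localization $u\mapsto f_i^2u$, asserting in effect that $\sum_i|\nabla(f_i^2u)|^2-|\nabla u|^2=\big(\sum_i|\nabla f_i|^2\big)u^2$; but that identity holds for $f_i$-localization under $\sum_i f_i^2=1$, not for $f_i^2$-localization, and, as you correctly observe, the cross term $4u\,\nabla u\cdot\sum_i f_i^3\nabla f_i=u\,\nabla u\cdot\nabla\big(\sum_i f_i^4\big)$ does not vanish. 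Your repair --- rewriting $\sum_i f_i^3\nabla f_i=-\sum_i f_i(1-f_i^2)\nabla f_i$ via $\sum_i f_i\nabla f_i=0$, applying Cauchy--Schwarz, and absorbing the cross term into the deficit $(\sum_i f_i^4-1)|\nabla u|^2$ by Young --- is rigorous and is the right idea.

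One caveat: your assertion that this ``keeps the coefficient at the claimed value'' is not accurate. Your chain gives the pointwise bound $\sum_i|\nabla(f_i^2u)|^2-|\nabla u|^2\le 4u^2\sum_i|\nabla f_i|^2$, and after the factor $1/2$ in front of the kinetic term this yields $2\big[\sum_i\|\nabla f_i\|_{\Linfty}^2\big]\int_A u^2$, i.e.\ twice the coefficient in the statement. (For $n=2$, the only case used downstream, a sharper computation exploiting $f_1\nabla f_1=-f_2\nabla f_2$ shows the optimal pointwise constant is exactly $2u^2\sum_i|\nabla f_i|^2$, which does recover the stated coefficient $1$ after the factor $1/2$; for general $n$ your constant $4$ seems to be what Cauchy--Schwarz gives.) This factor of two is harmless for the application in Lemma~\ref{analogue of Lemma 3.2 of Frank-Nam-Van Den Bosch}, where the term is absorbed into $C/(\lambda^2 s^2)$, but you should either state the lemma with the larger constant or restrict to the sharp $n=2$ computation rather than claim the constant comes out as stated.
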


\begin{proof}
	The Coulomb repulsion part is exactly the same as in \cite[Lemma~3.1]{FrankNamVanDenBosch}.

	\medskip

	{\em Gradient term.} Again, as done in \cite[Lemma~3.1]{FrankNamVanDenBosch}, we apply the IMS formula
	\begin{align} \sum_{i=1}^n \int_{\mathbb{R}^3} |\nabla(f_i^2 \sqrt{\rho})|^2 \d x -
	\int_{\mathbb{R}^3} |\nabla \sqrt{\rho}|^2 \d x &=
	 \int_{\mathbb{R}^3} \Big(\sum_{i=1}^n |\nabla f_i|^2\Big) \rho \d x \\
	 &\le  \Big(\sum_{i=1}^n \|\nabla f_i\|_{  \Linfty }^2\Big)	 \int_{A} \rho \d x  \end{align}
	 with $\rho=u^2$, hence
	\[ \sum_{i=1}^n \int_{\mathbb{R}^3} |\nabla(f_i^2 u)|^2 \d x -
	\int_{\mathbb{R}^3} |\nabla u|^2 \d x
	\le  \Big(\sum_{i=1}^n \|\nabla f_i\|_{  \Linfty }^2\Big)	 \int_{A} u^2 \d x.  \]

	 \medskip

	 {\em Double well term.} Direct computations give
	 \begin{align*}
	 \sum_{i=1}^n &\int_{\mathbb{R}^3} f_i^4 u^2(1-f_i^2 u )^2 \d x -
	 \int_{\mathbb{R}^3} u^2(1-u)^2 \d x\\
	 &= \int_{\mathbb{R}^3} \sum_{i=1}^n \Big( (f_i^2 u)^4-2(f_i^2 u)^2+(f_i^2 u)^2  \Big)\d x -
	 \int_{\mathbb{R}^3} (u^4-2u^3+u^2) \d x\\
	 &= \int_{A} \Big[\sum_{i=1}^n \Big( (f_i^2 u)^4-2(f_i^2 u)^2+(f_i^2 u)^2  \Big) -
	  (u^4-2u^3+u^2)\Big]  \d x,
	 \end{align*}
since outside of $A$ we have $f_i=0$ for all but one index (that we call $j$), and condition $\sum_{i=1}^n f_i^2=1$
forces $f_j=1$. The above inequality then continues as
\begin{align*}
\int_{A} &\Big[\sum_{i=1}^n \Big( (f_i^2 u)^4-2(f_i^2 u)^2+(f_i^2 u)^2  \Big) -
(u^4-2u^3+u^2)\Big]  \d x\\
&=\int_{A} \Big[  u^4\Big(\sum_{i=1}^n f_i^8 -1\Big)
-2u^3\Big(\sum_{i=1}^n f_i^6 -1\Big)
+u^2\Big( \underbrace{ \sum_{i=1}^n f_i^4 -1}_{< 0}\Big) \Big]  \d x\\
&\le
\int_{A} \Big[ 2u^3\Big(1-\sum_{i=1}^n f_i^6 \Big)- u^4\Big(1-\sum_{i=1}^n f_i^8 \Big)
   \Big]  \d x\\
   &\le
   \int_{A} (2u^3- u^4)\Big(1-\sum_{i=1}^n f_i^6 \Big)\d x
   \\
   &\le
   \int_{A\cap \{u\le 2\}} 2u^3\d x \\
   &\le \min\bigg\{  4\int_{A\cap \{u\le 2\}} u^2\d x,
     8\int_{A\cap \{u\le 2\}} u\d x \bigg\}\\
     &\le \min\bigg\{  4\int_{A} u^2\d x,
     8\int_{A} u\d x \bigg\},
\end{align*}
and the proof is complete.
\end{proof}

\begin{Lmm}\label{analogue of Lemma 3.2 of Frank-Nam-Van Den Bosch}
	(Analogue of \cite[Equation~(22)]{FrankNamVanDenBosch})
	Assume there exists a minimizer $u\in\Honehat$ with $\|u\|_{ \Lone }=M$.
	For all $r,s>0$, $0<\lambda\le 1/2$, we have
	\begin{align}
	\frac{1}{8}\bigg(\int_{\mathbb{R}^3} \chi_{(1+\lambda)r}^+ u\d x \bigg)^2 &\le
	2s\mathcal{D}(\chi_{(1+\lambda)r}^+ u)
	+\frac{C}{\lambda^2 s^2} \int_{ \mathbb{R}^3 }\chi_{r}^+ u^2\d x \\
	&\ \ \ +\bigg( 8+\frac{1}{4}\Big[ \sup_{|z|\ge r} |z|\Phi_r(z) \Big] \bigg)\int_{ \mathbb{R}^3 }\chi_{r}^+ u\d x,
	\label{analogue of (22)}
	\end{align}
	where $\chi_{r}^+:=\mathbbm{1}_{|x|\ge r}$, and
	\[ \Phi_r(x):= \frac{\ZT }{|x|} - \int_{B_r} \frac{u(y)}{|x-y|}\d y. \]
\end{Lmm}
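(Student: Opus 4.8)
The plan is to run a localization argument across the annulus $\{r\le |x|\le (1+\lambda)r\}$: split $u$ with a partition of unity, compare the split configuration with $u$ by minimality, and then turn the resulting Coulomb balance into a bound on the exterior mass. First I would fix a smooth radial partition of unity $f_1^2+f_2^2\equiv 1$ with $f_1\equiv 1$ on $B_r$, $\supp f_1\subset B_{(1+\lambda)r}$, and $\|\nabla f_i\|_{\Linfty}\le C/(\lambda r)$, so that the transition set $A=\bigcup_i\{0<f_i<1\}$ lies in the annulus, $f_2^2u$ is supported in $\{|x|\ge r\}$, and $f_2^2u=u$ on $\{|x|\ge (1+\lambda)r\}$. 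In particular $\chi_{(1+\lambda)r}^+u\le f_2^2u\le \chi_r^+u$.

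Next I would feed this partition into Lemma~\ref{analogue of Lemma 3.1 of Frank-Nam-Van Den Bosch}, which bounds the non-Coulomb part of $\sum_i\EV(f_i^2u)-\EV(u)$ by $\big(\sum_i\|\nabla f_i\|_{\Linfty}^2\big)\int_A u^2+\min\{4\int_A u^2,\,8\int_A u\}\le \tfrac{C}{\lambda^2 r^2}\int_A u^2+8\int_A u$. To handle the Coulomb part I would invoke minimality: translating the exterior piece $f_2^2u$ off to infinity and simultaneously dilating it to kill its self-energy produces an admissible competitor of mass $M$ whose energy tends to $\EV(f_1^2u)$, since the cross interaction and the attraction of the far piece vanish and $\Iz\equiv 0$ by Lemma~\ref{InftyFunctionalIsZero}. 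Thus $\EV(u)\le \EV(f_1^2u)$. Combining this with the localization bound and expanding $\mathcal{D}(u)=\mathcal{D}(f_1^2u)+2\mathcal{D}(f_1^2u,f_2^2u)+\mathcal{D}(f_2^2u)$, the kinetic, double-well and self-Coulomb contributions of $f_1^2u$ cancel, leaving a master inequality of the form $2\mathcal{D}(f_1^2u,f_2^2u)+\mathcal{D}(f_2^2u)\le \int V f_2^2u+\tfrac{C}{\lambda^2 r^2}\int_A u^2+8\int_A u$.

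I would then reorganize the right-hand side via Newton's theorem. Using the spherical-average bound already exploited in this paper (equation (35) of \cite{LiebSimonsTF}) to replace the interior-ball potential, the difference $\int V f_2^2u-2\mathcal{D}(\mathbbm{1}_{B_r}u,f_2^2u)$ is controlled by $\int f_2^2u\,\Phi_r$; and since $\Phi_r(x)\le \tfrac{1}{|x|}\sup_{|z|\ge r}|z|\Phi_r(z)$ on $\{|x|\ge r\}$, this yields precisely the coefficient $\tfrac14\sup_{|z|\ge r}|z|\Phi_r(z)$ in front of $\int \chi_r^+u$, while the $\min\{\cdot,\cdot\}$ term keeps the linear contribution $8\int\chi_r^+u$. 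The remaining — and \emph{most delicate} — step is to extract the quadratic $\tfrac18\big(\int\chi_{(1+\lambda)r}^+u\big)^2$ from the exterior self-repulsion $\mathcal{D}(f_2^2u)$. Here I would use Newton's theorem in its lower-bound form, $2\mathcal{D}(\rho)\ge \tfrac{1}{2T}\big(\int_{|x|\le T}\rho\big)^2$ for $\rho=\chi_{(1+\lambda)r}^+u$, to capture the exterior mass lying inside a sphere of radius $T\sim s$ by $2s\,\mathcal{D}(\chi_{(1+\lambda)r}^+u)$, and then absorb the mass beyond radius $s$ into $\tfrac{C}{\lambda^2 s^2}\int\chi_r^+u^2$ by a Cauchy--Schwarz/Young splitting fed by the gradient-localization error; the $L^2$ integral is finite, and in fact small, by Lemma~\ref{relating L2 and L1 norms - improved lemma}.

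The main obstacle is this last quadratic estimate: isolating $\tfrac18\big(\int\chi_{(1+\lambda)r}^+u\big)^2$ with the stated constant and the free parameter $s$, while funnelling every error into exactly the two remaining terms, requires careful bookkeeping of the Newton lower bound against the tail mass beyond radius $s$. A secondary difficulty specific to this model is that the obstacle $u\ge 0$ blocks direct use of the Euler--Lagrange equation, so I would rely throughout on competitor (minimality) arguments together with the subadditivity and attainment facts of Lemma~\ref{PrprtsIOK}, which is one of the changes relative to \cite{FrankNamVanDenBosch}.
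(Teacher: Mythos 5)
Your overall frame --- localize with a partition of unity, invoke minimality via a binding inequality (using \cref{InftyFunctionalIsZero} and the subadditivity from \cref{PrprtsIOK}), feed the non-Coulomb terms into \cref{analogue of Lemma 3.1 of Frank-Nam-Van Den Bosch}, and control the attraction through $\Phi_r$ and the spherical-average bound --- matches the paper. But the step you yourself flag as the most delicate one, extracting $\tfrac18\big(\int\chi_{(1+\lambda)r}^+u\big)^2$, is where the proposal breaks down, and the breakdown traces back to your choice of a \emph{radial} partition of unity. The paper (following \cite{FrankNamVanDenBosch}) does not use a radial cut: it uses $\chi_i(x)=g_i\big((\nu\cdot\theta(x)-\ell)/s\big)$, a smoothed half-space cut in a direction $\nu\in S^2$ at level $\ell$, with $\theta$ radial and vanishing on $B_r$ so that the ball is left intact. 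The binding inequality is then \emph{averaged over all} $\ell\in\mathbb{R}$ \emph{and} $\nu\in S^2$, and the identity
\begin{equation}
\int_{S^2}\int_{\mathbb{R}}\mathbbm{1}\{\nu\cdot y\le \ell\le \nu\cdot x-s\}\,\mathrm{d}\ell\,\frac{\mathrm{d}\nu}{4\pi}
=\frac{\big(|x-y|-s\big)_+^2}{4|x-y|}\;\ge\;\frac{|x-y|}{4}-\frac{s}{2}
\end{equation}
is what simultaneously produces the full quadratic term $\tfrac14\big(\int\chi_{(1+\lambda)r}^+u\big)^2$ (up to the constant) \emph{and} the $-2s\,\mathcal{D}(\chi_{(1+\lambda)r}^+u)$ correction, with $s$ entering as the width of the sliding transition layer (this is also why the error term carries $C/(\lambda^2s^2)$ rather than the $C/(\lambda^2r^2)$ that your radial cut would give --- note that your own master inequality has $r$ in that denominator and $s$ only appears later, with a different meaning).

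Your substitute for this averaging --- the Newton lower bound $2\mathcal{D}(\rho)\ge\tfrac{1}{2T}\big(\int_{B_T}\rho\big)^2$ with $T\sim s$ --- only captures the exterior mass lying \emph{inside} $B_T$. The tail $\int_{|x|\ge T}u\,\mathrm{d}x$ is not controllable by the remaining error terms: Cauchy--Schwarz against $\|u\|_{\Ltwo}$ fails on the unbounded region $\{|x|\ge T\}$, and a priori a minimizer could carry a fixed fraction of $M$ arbitrarily far out, so there is no way to absorb that tail into $\tfrac{C}{\lambda^2s^2}\int\chi_r^+u^2$. Since the whole point of the lemma (in the $r\to0^+$ limit used for part (b) of \cref{Th1}) is that the \emph{entire} mass $M$ appears squared on the left for each fixed $s$, losing the far tail defeats the argument. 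To repair the proof you need the directional localization and the $(\ell,\nu)$-averaging; the radial cut plus Newton cannot reproduce the conclusion.
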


\begin{proof}
Assume there exists a minimizer $u\in \Honehat$. As done in \cite[Lemma~3.2]{FrankNamVanDenBosch},
	for parameters $s,\ell,\lambda>0$,
	choose partitions of unity $\chi_i$, $i=1,2$, such that
	$\chi_1^2+\chi_2^2=1$, and
	\[ \chi_i (x):=g_i\left(\frac{\nu\cdot \theta(x)-\ell}{s}\right) ,\qquad i=1,2,\]
	where $\nu$ denotes the exterior unit normal to the ball
	$\{|x|\le r\}$, $g_i:\mathbb{R}\longrightarrow \mathbb{R}$, $i=1,2$, are smooth functions such that
	\[g_1^2+g_2^2=1,\quad |g_1'|^2+|g_2'|^2\le C,\quad
	g_1(t)=1 \text{ if } t\le 0,\quad g_1(t)=0 \text{ if } t\ge 1,\]
	$s,\ell$ are parameters to be chosen later, and $\theta:\mathbb{R}^3\longrightarrow \mathbb{R}^3$ is a radial function satisfying
	\[|\theta(x)|\le |x|,\quad \theta(x)=0 \text{ if } |x|\le r,\quad
	\theta(x)=x \text{ if } |x|\ge (1+\lambda) r,\quad
	|\nabla \theta| \le \frac{C}{\lambda}.
	\]
	Consequently,
	\[ \chi_1(x) = 1 \text{ if } \nu\cdot \theta(x)\le \ell,\qquad
	\chi_1(x) = 0 \text{ if } \nu\cdot \theta(x)\ge \ell+s . \]
	By Lemma \ref{analogue of Lemma 3.1 of Frank-Nam-Van Den Bosch}, using the minimality of $u$ we have
	\begin{align}
	0& \le \mathcal{E}_\ZT (\chi_1^2 u )+\mathcal{E}_{\ZT =0}(\chi_2^2 u )-\mathcal{E}_\ZT (u )\notag\\
	&\le \ZT \int_{\mathbb{R}^3}\frac{\chi_2^2 u}{|x|}\d x +
	\mathcal{D}(\chi_1^2 u)+\mathcal{D}(\chi_2^2 u)-\mathcal{D}(u)\notag\\
	&\qquad
	+\frac{C}{\lambda^2 s^2} \int_{\nu\cdot \theta(x)-s\le \ell \le \nu\cdot \theta(x)} u^2\d x
	+8 \int_{\nu\cdot \theta(x)-s\le \ell \le \nu\cdot \theta(x)} u\d x.
	\label{binding inequality}
	\end{align}
	The attraction and Coulomb repulsion terms are estimated exactly as in \cite[Lemma~3.2]{FrankNamVanDenBosch},
	hence \eqref{binding inequality} gives
	\begin{align}
	\int&\int_{ \substack{ |x|,|y|\ge (1+\lambda)r \\ \nu\cdot y\le \ell \le \nu\cdot x-s }} \frac{u(x)u(y)}{|x-y|}\d x\d y\notag\\
	&\le
	\int_{\ell \le x\cdot \theta(x)} u(x) [\Phi_r(x)]_+\d x
	+\frac{C}{\lambda^2 s^2} \int_{\nu\cdot \theta(x)-s\le \ell \le \nu\cdot \theta(x)} u^2\d x
	+8 \int_{\nu\cdot \theta(x)-s\le \ell \le \nu\cdot \theta(x)} u\d x,
	\label{analogue of (19)}
	\end{align}
	where $[\cdot]_+$ denotes the positive part.
	By arguing like in \cite[Lemma~3.2]{FrankNamVanDenBosch},
	 we can get
	\begin{align*}
	\frac{1}{8}\bigg(\int_{\mathbb{R}^3} \chi_{(1+\lambda)r}^+ u\d x \bigg)^2 &\le
	2s\mathcal{D}(\chi_{(1+\lambda)r}^+ u)
	+\frac{C}{\lambda^2 s^2} \int_{ \mathbb{R}^3 }\chi_{r}^+  u^2\d x\\
	&\ \ \ +\bigg( 8+\frac{1}{4}\Big[ \sup_{|z|\ge r} |z|\Phi_r(z) \Big] \bigg)\int_{ \mathbb{R}^3 }\chi_{r}^+ u\d x,
	\end{align*}
		which is the analogue of \cite[Equation~(22)]{FrankNamVanDenBosch}, and the proof is complete.
\end{proof}

The key difference between our Lemma \ref{analogue of Lemma 3.2 of Frank-Nam-Van Den Bosch} and
\cite[Lemma~3.2]{FrankNamVanDenBosch} is that we have
\[\int_{ \mathbb{R}^3 }\chi_{r}^+  u^2\d x\]
in the upper bound on the right hand side of \eqref{analogue of (22)},
instead of
\[\int_{ \mathbb{R}^3 }\chi_{r}^+  u\d x\]
as in
\cite[Equation~(22)]{FrankNamVanDenBosch}. Therefore, we cannot apply the arguments from \cite[Lemma~3.3]{FrankNamVanDenBosch},
since
\[\lim_{r\to 0}\int_{ \mathbb{R}^3 }\chi_{r}^+  u^2\d x = \|u\|_{ \Ltwo }^2\]
might be different from
\[\lim_{r\to 0}\int_{ \mathbb{R}^3 }\chi_{r}^+  u\d x = \|u\|_{ \Lone }=M.\]
We use Lemma \ref{relating L2 and L1 norms - improved lemma} to relate $\|u\|_{ \Ltwo }^2$ and $\|u\|_{ \Lone }=M$.

\section{Proof of part (a) of Theorem \ref{Th1}}

\begin{proof}[Proof of part (a) of Theorem \ref{Th1}]By part Lemma \ref{PrprtsIOK}, there exists $0<m\leq M\leq\ZT$ such that $\IV(M)=\IV(m)$ and $\IV(m)$ is attained, say at $u$ with $\vert\vert u\vert\vert_{\Lone}=m>0$. If $m<M$, then $\IV(M)=\IV(\alpha)$ for $m\leq \alpha\leq M$ by the nonincreasingness of $\IV$. Then $u$ satisfies the hypothesis of Proposition \ref{NoSlnsZrEig}, so that $m\geq\ZT$. However, this contradicts $m<M\leq\ZT$. Therefore, $m=M$ and $\IV(M)$ is attained.
\end{proof}

\section{Proof of part (b) of Theorem \ref{Th1}}

\begin{proof}[Proof of part (b) of Theorem	\ref{Th1} using Lemma \ref{relating L2 and L1 norms - lemma}]

Assume there exists a minimizer $u\in H^1(\mathbb{\R}^3)$ with $\|u\|_{ \Lone }=M$.
Taking the limit
$r\to 0^+$ in \eqref{analogue of (22)} and applying \eqref{relating L2 and L1 norms - equation} and \eqref{relating D and L2 norms - equation} gives
\begin{align}
\frac{1}{8}M^2 &\le
2s\mathcal{D}( u)
+\frac{C}{\lambda^2 s^2} \int_{ \mathbb{R}^3 } u^2\d x
+\bigg( 8+\frac{1}{4}\Big[ \sup_{|z|\ge 0} |z|\Phi_r(z) \Big] \bigg)M \notag\\
&
{\le}
2[2(\ZT+1)M+ 8\pi \ZT^2]s
+\frac{C[2(\ZT+2)M+ 8\pi \ZT^2]}{\lambda^2 s^2}
+\bigg( 8+\frac{\ZT}{4} \bigg)M,
\label{M not too large}
\end{align}
which must hold for all $\lambda\in (0,1/2]$, and $s>0$. We can
choose $\lambda=1/2$ and optimize over $s>0$, and note that the left hand side $M^2/8$
grows like $O(M^2)$, while the upper bound in the right hand side of \eqref{M not too large}
grows like $O(M)$. Thus \eqref{M not too large} can hold only for $M$ not too large, and the proof is complete.
\end{proof}

\begin{proof}[Proof of part (b) of Theorem \ref{Th1} using Lemma \ref{relating L2 and L1 norms - improved lemma})]
				The only difference is that \eqref{M not too large} is replaced by
\begin{align}
\frac{1}{8}M^2 &\le
+\frac{C}{\lambda^2 s^2} \int_{ \mathbb{R}^3 } u^2\d x
+\bigg( 8+\frac{1}{4}\Big[ \sup_{|z|\ge 0} |z|\Phi_r(z) \Big] \bigg)M \notag\\
&
\overset{\eqref{relating L2 and L1 norms - improved equation},\eqref{relating D and L2 norms - improved equation}}{\le}
2Cs\ZT^2 +\frac{C (\ZT^2+\ZT^6)}{\lambda^2 s^2}
+\bigg( 8+\frac{\ZT}{4} \bigg)M ,\label{M not too large - improved}
\end{align}
		and again the left hand side term
		grows like $O(M^2)$, while the upper bound in the right hand side of \eqref{M not too large - improved}
		grows like $O(M)$. Thus \eqref{M not too large - improved} can hold only for $M$ not too large, and the proof is complete.
\end{proof}


\bibliographystyle{siam}
\bibliography{bibitemsLatest}

\end{document}